\documentclass[]{article}

\usepackage{amsfonts}
\usepackage{amsmath}
\usepackage{amssymb}
\usepackage{amsthm}
\usepackage{indentfirst}
\usepackage[hidelinks]{hyperref}
\usepackage{url}

\numberwithin{equation}{section}
\theoremstyle{plain}
\newtheorem{theorem}{Theorem}[section]
\newtheorem{lemma}[theorem]{Lemma}
\newtheorem{proposition}[theorem]{Proposition}
\theoremstyle{remark}

\newcommand{\R}{\mathbb{R}}
\newcommand{\Mr}{\mathcal{M}_r}
\newcommand{\F}{\mathrm F}
\newcommand{\norm}[1]{\left\lVert #1\right\rVert}
\newcommand{\ip}[2]{\left\langle #1,#2\right\rangle_{\F}}
\newcommand{\eps}{\varepsilon}
\newcommand{\rank}{\operatorname{rank}}

\hypersetup{
 pdftitle={A Counterexample to Robust Second-Order Convergence of the Strang Projector-Splitting Integrator},
 pdfauthor={Shiheng Zhang},
 pdfkeywords={dynamical low-rank approximation; projector-splitting integrator; Strang splitting; robust error bounds; small singular values; counterexample}
}

\begin{document}

\title{A Counterexample to Robust Second-Order Convergence of the Strang
Projector-Splitting Integrator}
\author{Shiheng Zhang\thanks{Department of Applied Mathematics,
University of Washington, Seattle, WA 98195, USA
(\texttt{shzhang3@uw.edu}).}}
\date{}

\maketitle

\begin{abstract}
The classical Strang projector-splitting integrator is widely observed
to converge with order two, whereas the error analysis that remains
uniform as the smallest singular value retained in the low-rank
approximation tends to zero proves only order one.  We show that this
gap is intrinsic under the standard assumptions.  We construct
$3\times3$ matrix differential equations that are $C^2$ in time and
smooth in the matrix variable, with rank-two initial data
that satisfy uniform boundedness, Lipschitz, tangency-defect, and
regularity bounds.  Nevertheless, the exact-subflow Strang method has
a nonzero $h^2$ term in the local error over one periodic forcing cycle
consisting of four Strang steps.  Repetition of that cycle rules out a
global second-order bound
whose constant and stepsize threshold are independent of the retained
singular values.  The mechanism is a rapid rotation of the factor
directions associated with the small singular value: first-order
consistency is preserved, but the changing projection spaces prevent
the cancellation normally expected from a symmetric Strang
composition.  Hence the robust first-order result cannot, under these
assumptions alone, be upgraded to robust second order for the classical
projector-splitting method.
\end{abstract}

\medskip
\noindent\textbf{Keywords.}
Dynamical low-rank approximation; projector-splitting integrator;
Strang splitting; robust error bounds; small singular values;
counterexample.

\smallskip
\noindent\textbf{MSC 2020.} 65L20, 65L70, 65F55.

\section{Introduction}
\label{sec:introduction}

Consider a matrix differential equation
\begin{equation}\label{eq:ode-intro}
 \dot A(t)=F(t,A(t)),\qquad A(0)=A^0.
\end{equation}
Dynamical low-rank approximation replaces $A(t)$ by a matrix of fixed
rank $r$ and projects $F(t,Y)$ onto the corresponding tangent
space \cite{KochLubich2007}.  The projector-splitting integrator carries
out this projection through equations for the factors in
$Y=USV^\top$ \cite{LubichOseledets2014}.  Kieri, Lubich, and Walach
proved that both its Lie and Strang variants satisfy a global bound of the
form
\[
 \norm{Y^n-A(t_n)}_{\F}\le C(\delta+\eps+h),
\]
where $\delta$ is the initial error, $\eps$ bounds the component normal
to the rank-$r$ manifold, and the constant is independent of the
retained singular values
\cite{KieriLubichWalach2016}.  This is the robust first-order result on
which the present paper builds.
Following the standard terminology, an error bound is called
\emph{robust} if its constants and stepsize threshold remain independent
of the retained singular values as these approach zero.

Second-order convergence of the Strang projector-splitting integrator
is nevertheless widely observed numerically, while the known robust
proof yields only order one
\cite{CerutiEinkemmerKuschLubich2024}.  Robust second-order convergence
has instead been proved for midpoint, parallel, and Runge--Kutta
basis-update-and-Galerkin (BUG) integrators
\cite{CerutiEinkemmerKuschLubich2024,Kusch2025,NobileRiffaud2026},
which are different numerical methods.  These facts leave two possible explanations:
either the analysis loses an additional order, or robust second-order
convergence fails.  The present paper proves the
second alternative for the assumptions in Section~\ref{sec:method},
even when all five factor subproblems are solved exactly.  To our
knowledge, this is the first analytic counterexample to a
retained-singular-value-independent second-order bound for the classical
exact-subflow Strang projector-splitting integrator under these
assumptions.

The counterexample also explains the failure mechanism.  Near a small
retained singular value, a perturbation that is small in matrix norm can
produce an order-one change in the associated factor spaces.  This
destroys the second-order cancellation normally expected from Strang
symmetry while preserving first-order consistency.  Repetition then
accumulates the resulting local error, showing that robust second-order
convergence fails under the stated assumptions.

Section~\ref{sec:method} recalls the assumptions and the five exact
subflows.  Section~\ref{sec:construction} defines
the two equations in the counterexample family;
Section~\ref{sec:defect} computes
the nonzero four-step term, and Section~\ref{sec:accumulation} proves the
theorem.  The rank and projector estimates are stated in
Section~\ref{sec:defect}.  Appendix~A supplies the desingularized factor
equations, rank margins, and uniform projector expansions used in the
proof.

\section{Assumptions and the exact Strang step}
\label{sec:method}

Let
\[
 \Mr:=\{Y\in\R^{m\times n}:\rank(Y)=r\}.
\]
For $Y=USV^\top\in\Mr$, with $U,V$ column-orthonormal and $S$
nonsingular, its tangent space and orthogonal tangent projector are
\[
 T_Y\Mr=\{Z:(I-UU^\top)Z(I-VV^\top)=0\},
\]
and
\begin{equation}\label{eq:tangent-projector}
 \Pi(Y)Z:=ZVV^\top-UU^\top ZVV^\top+UU^\top Z.
\end{equation}
We use the Frobenius norm for matrices.  Operator norms are written
$\|\cdot\|_{\F\to\F}$,
$\|\cdot\|_{(\F,\F)\to\F}$, according to their domain and range; for
a bilinear map $\mathcal T$,
\[
 \|\mathcal T\|_{(\F,\F)\to\F}
 :=\sup_{\norm X_{\F}=\norm Z_{\F}=1}\norm{\mathcal T[X,Z]}_{\F}.
\]
Throughout, $A(t)$ denotes the full solution, $Y$ a rank-$r$
approximation, and $X,Z$ generic matrix arguments or temporary
subproblem states.

Fix $Y^0\in\Mr$, $\delta,\eps\ge0$, and constants
$(B_F,L_F,T,\kappa)$.  The robust analysis assumes
\begin{equation}\label{eq:basic-bounds}
 \norm{F(t,X)}_{\F}\le B_F,\qquad
 \norm{F(t,X)-F(t,Z)}_{\F}\le L_F\norm{X-Z}_{\F}.
\end{equation}
Moreover,
\begin{align}
 F(t,Y)&=M(t,Y)+R(t,Y),\qquad M(t,Y)\in T_Y\Mr,\notag\\
 \norm{M(t,Y)}_{\F}&\le B_F,\qquad \norm{R(t,Y)}_{\F}\le\eps,
 \label{eq:tangent-decomp}\\
 \norm{Y^0-A^0}_{\F}&\le\delta.
 \label{eq:remainder-initial}
\end{align}
The counterexample will also have the fixed smoothness bounds
\begin{equation}\label{eq:smooth-bounds}
 \norm{D_X^2F}_{(\F,\F)\to\F},\quad
 \norm{\partial_tF}_{\F},\quad
 \norm{\partial_t^2F}_{\F},\quad
 \norm{D_X\partial_tF}_{\F\to\F}
 \le\kappa.
\end{equation}
All bounds hold for $t\in[0,T]$ and $X,Z\in\R^{m\times n}$; the tangent
decomposition is required for $Y\in\Mr$.

We next fix the numerical method.  Start one step at time $t$ from
$Y=U_0S_0V_0^\top$.  The first K subproblem keeps $V_0$ fixed and
its endpoint determines a new column basis $U_1$.  The L subproblem
then determines a new orthonormal basis $V_1$ for the row space.  Define
\begin{equation}\label{eq:projectors-one-step}
 P_0:=V_0V_0^\top,\qquad Q:=U_1U_1^\top,
 \qquad P_1:=V_1V_1^\top.
\end{equation}
One exact-subflow Strang step is the following ordered composition; each
subproblem uses the indicated time interval.  In the table, \(t'\)
is the running time in that interval, and the initial value of each
subproblem is the factor data produced by the preceding row.
\begin{equation}\label{eq:five-substeps}
\begin{array}{c|c|c}
\text{substep}&\text{time interval}&\text{matrix equation}\\
\hline
K_1&[t,t+h/2]&\dot X=F(t',X)P_0\\
S_1&[t,t+h/2]&\dot X=-QF(t',X)P_0\\
L  &[t,t+h]  &\dot X=QF(t',X)\\
S_2&[t+h/2,t+h]&\dot X=-QF(t',X)P_1\\
K_2&[t+h/2,t+h]&\dot X=F(t',X)P_1.
\end{array}
\end{equation}
These are factor equations: for example, the first is
\[
 \dot K=F(t',KV_0^\top)V_0,\qquad K(t)=U_0S_0,
\]
with $X=KV_0^\top$.  In the five rows of
\eqref{eq:five-substeps}, respectively, the temporary matrix is
\begin{equation}\label{eq:five-factor-representations}
 X=K_1V_0^\top,\quad U_1S_1V_0^\top,\quad U_1L^\top,
 \quad U_1S_2V_1^\top,\quad K_2V_1^\top.
\end{equation}
The K and L endpoints are QR-factorized when a
new basis is needed; the S subproblems update the $r\times r$ core.
The five factor problems are connected as follows:
\begin{equation}\label{eq:factor-transfers}
\begin{aligned}
 K_1(t)&=U_0S_0,
 &K_1(t+h/2)&=U_1\widehat S_1,\\
 S_1(t)&=\widehat S_1,
 &S_1(t+h/2)&=\widetilde S_1,\\
 L(t)&=V_0\widetilde S_1^\top,
 &L(t+h)&=V_1\widehat S_2^\top,\\
 S_2(t+h/2)&=\widehat S_2,
 &S_2(t+h)&=\widetilde S_2,\\
 K_2(t+h/2)&=U_1\widetilde S_2,
 &K_2(t+h)&=U_2S^+.
\end{aligned}
\end{equation}
Here the first, third, and fifth endpoint identities are QR
factorizations, and the final matrix is
$U_2S^+V_1^\top$.  Thus every subproblem is run from the factor data
produced by the preceding one.  Equation \eqref{eq:factor-transfers}
specifies both the transferred factor data and the time interval used by
each subproblem.
For orthogonal $G_U,G_V\in\mathbb R^{r\times r}$, the change
$(U,S,V)\mapsto(UG_U,G_U^\top SG_V,VG_V)$ leaves $USV^\top$ unchanged.
The factor equations and \eqref{eq:factor-transfers} are equivariant
under this change; hence the final matrix is independent of factor
gauges and QR choices.
Every equation in \eqref{eq:five-substeps} is solved exactly.  We call a
step \emph{rank-admissible} if every K or L factor that must be
QR-factorized has rank $r$ and every transferred core is nonsingular.
Under this condition, all QR factorizations and core transfers in
\eqref{eq:factor-transfers} are defined.

Denote the resulting map by $\Psi_{t,h}$ and define
\begin{equation}\label{eq:numerical-method}
 Y^{n+1}:=\Psi_{t_n,h}(Y^n),\qquad t_n:=nh.
\end{equation}
The proposed robust second-order statement would assert the existence
of constants $C>0$ and $h_0>0$, depending only on
$m,n,r,B_F,L_F,T,\kappa$, such that, for every problem satisfying
\eqref{eq:basic-bounds}--\eqref{eq:smooth-bounds} and every stepsize
$0<h\le h_0$ for which the exact-subflow run is rank-admissible, the
numerical solution satisfies
\begin{equation}\label{eq:target-intro}
 \max_{t_n\le T}\norm{Y^n-A(t_n)}_{\F}
 \le C(\delta+\eps+h^2),
\end{equation}
where the maximum is over the grid points $t_n=nh\le T$.  Neither $C$
nor $h_0$ may depend on a retained singular value.  The family in
Section~\ref{sec:construction}
has $h_j\to0$ and therefore eventually satisfies $h_j\le h_0$ for
every fixed $h_0>0$.
We prove that, along these admissible stepsizes, the ratio of the
left-hand side of \eqref{eq:target-intro} to
$\delta_j+\eps_j+h_j^2$ is unbounded.  Hence no such uniform constants
$C$ and $h_0$ exist.

\section{The counterexample family}
\label{sec:construction}

The construction combines three ingredients.  The initial matrix has a
second singular value of size $h_j^3$.  The term $R_j(t)$ rotates its
associated left and right factor directions, while the two matrix
functions $M_+$ and $M_-$ agree at the limiting matrix $E_{11}$ but have
different first derivatives.  Subtracting the two problems cancels their
common first-order term and leaves the nonzero $h_j^2$ term computed in
Section~\ref{sec:defect}.

For every $j\ge3$ we construct two equations, denoted by their
right-hand sides $F_{j,+}$ and $F_{j,-}$.  We write both equations at
once using \(\eta\in\{+1,-1\}\).  Let $E_{ab}$ denote the
$3\times3$ matrix unit and put
\begin{equation}\label{eq:hj}
 h_j:=2^{-j^2},
 \qquad A_{j,\eta}(0):=Y_{j,\eta}^0:=E_{11}+h_j^3E_{22}.
\end{equation}
Both the full equation and the numerical method use this initial value.
Thus $\delta_j=0$, and the retained singular values are $1$ and
$h_j^3$.

For $X=(x_{ab})\in\mathbb R^{3\times3}$, define
\begin{equation}\label{eq:M-field}
 \Omega:=E_{31}-E_{13},
\end{equation}
\begin{equation}\label{eq:q-field}
 q_\eta(X):=\frac{1+\eta\tanh(x_{31})}
 {\sqrt{1+\norm{X}_{\F}^2}},
 \qquad M_\eta(X):=q_\eta(X)(\Omega X-X\Omega).
\end{equation}
The two matrix functions have the same value at the rank-one limit of
the initial matrices, whereas their first derivatives differ:
\begin{align}
 M_+(E_{11})=M_-(E_{11})
 &=\frac{E_{31}+E_{13}}{\sqrt2},\notag\\
 D(M_+-M_-)(E_{11})[Z]
 &=\sqrt2\,z_{31}(E_{31}+E_{13}),
 \qquad Z=(z_{ab}).
 \label{eq:pair-design}
\end{align}
Next define
\begin{equation}\label{eq:nu-W}
 \boldsymbol{\nu}(\theta):=
 \begin{pmatrix}0\\ \cos(\pi\theta/4)\\ \sin(\pi\theta/4)\end{pmatrix},
 \qquad W(\theta):=\boldsymbol{\nu}(\theta)
 \boldsymbol{\nu}(\theta)^\top.
\end{equation}
Thus $\boldsymbol\nu(\theta)$ is a unit vector rotating in the
$(\boldsymbol e_2,\boldsymbol e_3)$-plane and $W(\theta)$ is the
orthogonal projector onto its span; in particular, $W(\theta+4)=W(\theta)$.
To reduce the angular speed smoothly from one to zero, set
\begin{equation}\label{eq:psi-def}
 \psi(x):=x+15x^4-39x^5+34x^6-10x^7.
\end{equation}
Define
\begin{equation}\label{eq:beta}
 \beta:=\max_{0\le x\le1}|\psi'(x)|=\frac{27904}{16807}
\end{equation}
and
\begin{equation}\label{eq:phase-def}
 \vartheta_j(t):=
 \begin{cases}
  t/h_j,&0\le t\le4jh_j,\\
  4j+4\psi\bigl((t-4jh_j)/(4h_j)\bigr),
       &4jh_j\le t\le4(j+1)h_j,\\
  4(j+1),&t\ge4(j+1)h_j.
 \end{cases}
\end{equation}
Thus $\vartheta_j(t)=t/h_j$ on $0\le t\le4jh_j$, its speed decreases
smoothly to zero on $4jh_j\le t\le4(j+1)h_j$, and it is constant
thereafter.  Since $4(j+1)2^{-j^2}<1$ for $j\ge3$, all three pieces
occur inside the interval $[0,1]$.
On $[0,1]$, define
\begin{equation}\label{eq:R-def}
 R_j(t):=h_j^3\vartheta_j'(t)W'(\vartheta_j(t))
 =\frac{d}{dt}\bigl(h_j^3W(\vartheta_j(t))\bigr).
\end{equation}
Since $W(\theta+4)=W(\theta)$, this gives
\begin{equation}\label{eq:periodic-forcing}
 R_j(t)=h_j^2W'(t/h_j),\qquad 0\le t\le4jh_j,
\end{equation}
followed by the four-step switch-off interval
$[4jh_j,4(j+1)h_j]$ and then $R_j(t)=0$.

The two right-hand sides are
\begin{equation}\label{eq:full-field}
 F_{j,\eta}(t,X):=M_\eta(X)+R_j(t),
 \qquad \eta\in\{+1,-1\}.
\end{equation}
The endpoint identities verified in Proposition~\ref{prop:family-bounds}
imply
$R_j\in C^2([0,1];\R^{3\times3})$.

Write $Y_{j,\eta}^n$ for the iterates of \eqref{eq:numerical-method}
with right-hand side $F_{j,\eta}$ and stepsize $h_j$.

For the $n$th Strang step, its \emph{discrete phase index} is
$\ell:=n\bmod4\in\{0,1,2,3\}$.  A \emph{four-step block} is the ordered
composition of the four phase indices $0,1,2,3$ and therefore contains twenty
factor substeps.  On the first block, write
\begin{equation}\label{eq:four-step-map}
 \Psi^{[4]}_{h_j,\eta}:=
 \Psi^{j,\eta}_{3h_j,h_j}\circ
  \Psi^{j,\eta}_{2h_j,h_j}\circ
  \Psi^{j,\eta}_{h_j,h_j}\circ
  \Psi^{j,\eta}_{0,h_j}.
\end{equation}
Here $\Psi^{j,\eta}_{t,h_j}$ denotes
\eqref{eq:numerical-method} with the right-hand side $F_{j,\eta}$.
Let $\Phi_{j,\eta}(t;t_0,X_0)$ denote the solution value at time $t$ with
$A(t_0)=X_0$.  For the problem indexed by $(j,\eta)$, define
\begin{equation}\label{eq:block-objects}
 t_{k,j}:=4kh_j,
 \qquad 0\le k\le j.
\end{equation}
\begin{equation}
 \tau_{k,j,\eta}
 :=Y_{j,\eta}^{4(k+1)}
 -\Phi_{j,\eta}(t_{k+1,j};t_{k,j},Y_{j,\eta}^{4k}),
 \qquad 0\le k<j.
 \label{eq:block-local-error}
\end{equation}
For $k=0$, \eqref{eq:block-local-error} reads
\begin{equation}\label{eq:first-block-local-error}
 \tau_{0,j,\eta}
 =\Psi^{[4]}_{h_j,\eta}(Y^0_{j,\eta})
 -\Phi_{j,\eta}(4h_j;0,Y^0_{j,\eta}).
\end{equation}

\begin{theorem}[Main result]
\label{thm:main}
For the $3\times3$, rank-two family above, let $h_j=2^{-j^2}$.  For every $j\ge3$ and
$\eta\in\{+1,-1\}$, the equation $\dot A=F_{j,\eta}(t,A)$ is defined
on $[0,1]$ and satisfies, for all $t\in[0,1]$ and
$X,Z\in\mathbb R^{3\times3}$,
\begin{equation}\label{eq:theorem-basic-bounds}
 \norm{F_{j,\eta}(t,X)}_{\F}\le5,
 \qquad
 \norm{F_{j,\eta}(t,X)-F_{j,\eta}(t,Z)}_{\F}
 \le12\norm{X-Z}_{\F}.
\end{equation}
For every $Y\in\mathcal M_2$,
\begin{align}
 F_{j,\eta}(t,Y)&=M_\eta(Y)+R_j(t),\notag\\
 M_\eta(Y)&\in T_Y\mathcal M_2,\qquad
 \norm{M_\eta(Y)}_{\F}\le4,\notag\\
 \norm{R_j(t)}_{\F}&\le\eps_j,
 \label{eq:theorem-normal-bound}
\end{align}
where
\begin{equation}\label{eq:theorem-epsilon}
 \eps_j:=\beta\sqrt2\,\frac\pi4h_j^2,
 \qquad
 \norm{Y^0_{j,\eta}-A_{j,\eta}(0)}_{\F}=0.
\end{equation}
The two retained singular values of $Y^0_{j,\eta}$ are $1$ and
$h_j^3$.
Moreover,
\begin{equation}\label{eq:theorem-smooth-bounds}
 \max\!\left\{
 \norm{D_X^2F_{j,\eta}}_{(\F,\F)\to\F},
 \norm{\partial_tF_{j,\eta}}_{\F},
 \norm{\partial_t^2F_{j,\eta}}_{\F},
 \norm{D_X\partial_tF_{j,\eta}}_{\F\to\F}
 \right\}\le123.
\end{equation}
For all sufficiently large $j$ and both values $\eta=+1,-1$, every
factor substep in every complete grid step whose endpoint does not
exceed $1$ is well defined, and
\begin{equation}\label{eq:main-lower}
 \max_{\eta\in\{+1,-1\}}
 \norm{Y_{j,\eta}^{4j}-A_{j,\eta}(4jh_j)}_{\F}
 \ge \frac{\sqrt2}{64}j h_j^2.
\end{equation}
Consequently, \eqref{eq:target-intro} cannot hold with $C$ and the
admissible stepsize bound depending only on the fixed problem-class
constants and not on the retained singular values.  Thus the classical
Strang projector-splitting integrator does not attain robust
second-order convergence under the stated assumptions.
\end{theorem}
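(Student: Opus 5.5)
The plan has three parts: routine verification of the class bounds, a computation of the first four-step local error to order $h_j^2$, and an accumulation argument over $j$ blocks. In the accumulation, the $\eta$-difference removes the common first-order terms.

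\emph{Class bounds.} I first check \eqref{eq:theorem-basic-bounds}--\eqref{eq:theorem-smooth-bounds} directly.
\begin{itemize}
\item For the value bound, use $|q_\eta|\le 2/\sqrt{1+\norm X_\F^2}$ and $\norm{\Omega X-X\Omega}_\F\le 2\norm X_\F$. Together these give $\norm{M_\eta(X)}_\F\le4$.
\item For $R_j$, note that $\norm{W'(\theta)}_\F=\sqrt2\,\pi/4$ and $|\vartheta_j'|\le\beta/h_j$. This gives $\norm{R_j}_\F\le\eps_j\le1$, hence the bound $5$.
\item Tangency of $M_\eta(Y)$ holds because $\Omega Y-Y\Omega$ is the derivative of $e^{s\Omega}Ye^{-s\Omega}$, a curve in $\mathcal M_2$.
\item The Lipschitz and second-derivative bounds follow by differentiating the product $q_\eta\cdot(\Omega X-X\Omega)$. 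The factor $1/\sqrt{1+\norm X^2}$ is chosen to keep every derivative bounded.
\item The time derivatives come only from $R_j$. Since $\vartheta_j$ is piecewise polynomial, $\partial_t^kR_j=h_j^3\,\frac{d^{k+1}}{dt^{k+1}}W(\vartheta_j)$ is of size $h_j^{2-k}$, so $\norm{\partial_t^2F}=O(1)$.
\item $C^2$ regularity at the junctions $4jh_j$ and $4(j+1)h_j$ reduces to the identities $\psi'(0)=1$, $\psi''(0)=\psi'''(0)=0$, and $\psi'(1)=\psi''(1)=\psi'''(1)=0$. These are checked from \eqref{eq:psi-def}. Finally, $D_X\partial_tF=0$.
\end{itemize}

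\emph{One-block computation.} Next I expand $\tau_{0,j,\eta}$ from \eqref{eq:first-block-local-error} in $h=h_j$. I write the state during the block as $E_{11}+O(h)$, with a small singular part of size $h^3$ whose directions rotate via $W(t/h)$.

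The key observation concerns the projectors $P_0,Q,P_1$. They contain the rank-one projectors onto $\boldsymbol e_1$ together with a second direction. That second direction is determined by the small singular value $h^3$ perturbed by $O(h^3)$ forcing, so it changes by $O(1)$ across substeps. I would desingularize the factor flows: within one substep of length $h/2$, $R_j$ moves the small component by $O(h^3)$, comparable to the component itself. The resulting second factor direction can therefore be computed explicitly, as a function of the phase $\ell$, to leading order.

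Then I compute the Strang composition to second order. The $M_\eta$ contribution uses $F=M_\eta(E_{11})+DM_\eta(E_{11})[\cdot]+\dots$, and the projections of $M_\eta(E_{11})=(E_{31}+E_{13})/\sqrt2$ onto the changing spaces enter. The $h^2$ coefficient is then a sum over the four phases.

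The difference between $\eta=+1$ and $\eta=-1$ isolates $\sqrt2\,z_{31}(E_{31}+E_{13})$ applied to the first-order increments. In the exact flow, $z_{31}$ accrues as $h\,M(E_{11})_{31}$. In the numerical scheme it accrues through $QF P$-type projections that depend on $W$, so the two second-order terms differ. I aim to show that
\[
\tau_{0,j,+}-\tau_{0,j,-}=c\,h^2(E_{31}+E_{13})+O(h^3)
\]
with explicit $c$, and I expect $|c|\sqrt2\ge\sqrt2/16$. This is the main obstacle. It requires uniform projector expansions (Appendix A) and careful tracking of how the $O(1)$ rotation of the small direction enters the $h^2$ term. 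As a first attempt, I would compute exactly in a reduced $3\times3$ ansatz in which the first column and row stay near $\boldsymbol e_1$ and the small block lives in $\mathrm{span}(\boldsymbol e_2,\boldsymbol e_3)$.

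\emph{Accumulation.} Since $R_j$ is $4h$-periodic on $[0,4jh]$ and the state changes by only $O(kh)$ after $k$ blocks, the block local error satisfies
\[
\tau_{k,j,+}-\tau_{k,j,-}=c\,h^2(E_{31}+E_{13})+O(h^2\,kh)+O(h^3).
\]
This requires re-verifying that the singular structure recurs: the small singular value stays $\Theta(h^3)$, which is the rank-admissibility claim handled via the rank margins. The global error is the sum of the block local errors propagated by the exact flow, whose derivative is $I+O(kh)$ by the Lipschitz bound. Summing over $j$ blocks and using $j^2h\to0$ gives
\[
\norm{(Y^{4j}_{+}-A_+)-(Y^{4j}_{-}-A_-)}_\F\ge \tfrac{\sqrt2}{32}jh^2\,(1-o(1)).
\]
By the triangle inequality, one of the two global errors is at least half of this, which yields \eqref{eq:main-lower}.

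Since $\delta_j=0$ and $\eps_j=O(h_j^2)$, the ratio appearing in \eqref{eq:target-intro} is at least of order $j\to\infty$. This contradicts any uniform $C$ and $h_0$.
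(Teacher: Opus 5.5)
\textbf{Verdict: genuine gap.} Your decisive one-block computation aims at a term that vanishes. The accumulation step also needs a different justification.

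\textbf{The one-block defect is not along $H$.} You expect $\tau_{0,j,+}-\tau_{0,j,-}=c\,h_j^2(E_{31}+E_{13})+O(h_j^3)$ with $c\neq0$, coming from the different way $z_{31}$ accrues under the projected substeps. That contribution cancels.
\begin{itemize}
\item Since $D(M_+-M_-)(E_{11})[Z]=\sqrt2\,z_{31}H$ depends only on $z_{31}$, let $c_i$ be the $(3,1)$ entry of $\mathcal P_{\ell,i}(H)$. First-order consistency gives $\sum_i\alpha_ic_i=1$.
\item The $(3,1)$ entry of the within-step $h_j^2$ term is therefore
\[
 \sum_i\alpha_ic_i\sum_{p<i}\alpha_pc_p+\tfrac12\sum_i\alpha_i^2c_i^2
 =\tfrac12\Bigl(\sum_i\alpha_ic_i\Bigr)^2=\tfrac12,
\]
which is exactly the exact-flow value.
\item The $(1,3)$ entries deviate step by step by $\pm(1/32\pm\sqrt2/64)$, but these cancel over the four phases.
\end{itemize}
So the numerical and the exact four-step differences have the same $H$-part $8h_j^2H$, and $c=0$.

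What survives is $\frac{\sqrt2}{16}h_j^2E_{12}$. The operators $Z\mapsto Z(E_{11}+W(\theta))$ send $H$ to matrices containing $E_{13}W(\theta)=\sin(\pi\theta/4)\,\boldsymbol e_1\boldsymbol\nu(\theta)^\top$, whose $(1,2)$ entry is $\tfrac12\sin(\pi\theta/2)$. The resulting per-step coefficients
\[
 \tfrac1{16}\bigl(\sin\tfrac{\pi(\ell+1)}2-\sin\tfrac{\pi\ell}2\bigr)\cos^2\tfrac{\pi(2\ell+1)}8
\]
sum to $\tfrac18\cos\tfrac\pi4=\tfrac{\sqrt2}{16}$. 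Your accumulation, tested against $H$, would yield only $o(jh_j^2)$ and no lower bound. You must test against $E_{12}$.

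\textbf{The accumulation argument needs control of factor data, not of the matrix.} The $h_j^2$ coefficient of $\tau_{k,j,\eta}$ is not Lipschitz in the block-start matrix with an $h$-independent constant. It depends on the singular directions of the value $h_j^3$ and on $h_j^{-3}\det S$, which an $O(h_j^3)$ perturbation can change by $O(1)$. Meanwhile the matrix itself moves by $O(h_j)$ per block, so ``the state changes by $O(kh)$'' controls nothing.

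The paper instead proceeds as follows.
\begin{itemize}
\item It tracks the desingularized factor data $(U,V,s_{11},s_{12},s_{21},h_j^{-3}\det S)$.
\item It applies the gauge $G=\mathrm{diag}(1,-1)$ after each block, needed because $\boldsymbol\nu(4)=-\boldsymbol\nu(0)$, so that the reference tuple is a fixed point of the $h=0$ block map.
\item It obtains only $d_{k+1}\le C_0d_k+C_1h_j$, with $C_0$ not known to be $\le1$.
\item A first-exit argument then gives $d_k\le C_1jh_jC_0^{\,j}$. This tends to zero because $h_j=2^{-j^2}$ is superexponentially small; $j^2h_j\to0$ alone is not enough.
\end{itemize}

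\textbf{Missing admissibility claim.} The theorem also asserts that every factor substep up to time $1$ is well defined. Besides the periodic-block rank margins, this requires the switch-off margins. It also requires the observation that for $R_j=0$ the K, S and L flows are conjugations by orthogonal exponentials and so preserve rank. Your plan does not address either point.
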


\begin{proposition}[Uniform bounds]
\label{prop:family-bounds}
For every $j\ge3$ and each $\eta\in\{+1,-1\}$, $F_{j,\eta}$ satisfies
\eqref{eq:basic-bounds}--\eqref{eq:smooth-bounds} with
\begin{equation}\label{eq:fixed-constants}
 B_F=5,\qquad L_F=12,\qquad \kappa=123,
 \qquad \eps_j=\beta\sqrt2\,\frac{\pi}{4}h_j^2,
 \qquad \delta_j=0.
\end{equation}
\end{proposition}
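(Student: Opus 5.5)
We verify the bounds term by term, treating $M_\eta$ and $R_j$ separately and combining them through $F_{j,\eta}=M_\eta+R_j$. Since $R_j$ does not depend on $X$, all $X$-derivatives of $F_{j,\eta}$ are those of $M_\eta$. Likewise, all $t$-derivatives of $F_{j,\eta}$ are those of $R_j$, so $D_X\partial_tF_{j,\eta}=0$.

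\emph{The field $M_\eta$.} The commutator map $X\mapsto\Omega X-X\Omega$ satisfies $\norm{\Omega X-X\Omega}_{\F}\le2\norm{\Omega}_{2}\norm X_{\F}=2\norm X_{\F}$, because $\Omega$ has spectral norm $1$. The scalar prefactor satisfies $0\le q_\eta(X)\le 2/\sqrt{1+\norm X_{\F}^2}$. Hence
\[
\norm{M_\eta(X)}_{\F}\le \frac{4\norm X_{\F}}{\sqrt{1+\norm X_{\F}^2}}\le4 .
\]
For tangency, take $Y=USV^\top\in\mathcal M_2$. Then $\Omega Y$ has its range in that of $U$, and $Y\Omega$ has its row space in that of $V$. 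Therefore $(I-UU^\top)(\Omega Y-Y\Omega)(I-VV^\top)=0$, and multiplying by the scalar $q_\eta(Y)$ keeps $M_\eta(Y)$ in $T_Y\mathcal M_2$.

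For the Lipschitz bound and $D_X^2$, write $g(X)=(1+\eta\tanh x_{31})$ and $\rho(X)=(1+\norm X_{\F}^2)^{-1/2}$, and differentiate the product $g\,\rho\,\mathcal C(X)$, where $\mathcal C(X)=\Omega X-X\Omega$ is linear. The needed facts are elementary:
\begin{itemize}
\item $|\tanh'|\le1$ and $|\tanh''|\le 4/(3\sqrt3)<1$;
\item $\norm{D\rho}\le \norm X\rho^3$, and $\norm{D\rho}\,\norm X\le\rho$;
\item $\norm{D^2\rho}\,\norm X$ is bounded by a small absolute constant.
\end{itemize}
The crucial point is the decay of $\rho$: every term in which a derivative hits $g$ or $\rho$ still carries a factor $\rho\norm X$ or $\norm X^2\rho^3$, and these are bounded. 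This gives $\norm{DM_\eta}_{\F\to\F}\le 12$, hence the Lipschitz constant $12$ by the mean value theorem, and $\norm{D^2M_\eta}\le 123$. Crude triangle-inequality bookkeeping should land under these constants, which are visibly generous.

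\emph{The forcing $R_j$.} We have $W'(\theta)=\frac\pi4(\boldsymbol\nu\boldsymbol\mu^\top+\boldsymbol\mu\boldsymbol\nu^\top)$, where $\boldsymbol\mu\perp\boldsymbol\nu$ is a unit vector. Hence $\norm{W'}_{\F}=\sqrt2\,\pi/4$, and also $\norm{W''}_{\F}=\sqrt2(\pi/4)^2\cdot\sqrt2$ is bounded, as is $W'''$. Since $0\le\vartheta_j'\le\beta/h_j$, we get $\norm{R_j}_{\F}\le\beta\sqrt2\frac\pi4h_j^2=\eps_j$. Therefore $\norm F\le4+\eps_j\le5$.

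Next, $\partial_tR_j=h_j^3\bigl(\vartheta_j''W'+\vartheta_j'^2W''\bigr)$, and $\vartheta_j''=O(h_j^{-2})$, so this term is $O(h_j)$. Similarly, $\partial_t^2R_j$ involves $\vartheta_j'''=O(h_j^{-3})$ and is $O(1)$. Concretely,
\[
\norm{\partial_t^2R_j}_{\F}\le \max|\psi'''|\,\tfrac{4}{64}\norm{W'}+3\max|\psi''|\max|\psi'|\,\tfrac{4^2}{64}\norm{W''}\cdots+\beta^3\norm{W'''}h_j^0 .
\]
We bound each polynomial quantity $\max|\psi^{(k)}|$ on $[0,1]$ explicitly and check that the total is $\le123$.

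\emph{$C^2$ regularity.} This must be checked at the junctions $t=4jh_j$ and $t=4(j+1)h_j$. At the first junction, continuity of $\vartheta_j,\vartheta_j',\vartheta_j'',\vartheta_j'''$ requires $\psi(0)=0$, $\psi'(0)=1$, and $\psi''(0)=\psi'''(0)=0$; these hold because the nonlinear part of $\psi$ starts at $x^4$. At the second junction we need $\psi(1)=1$ and $\psi'(1)=\psi''(1)=\psi'''(1)=0$. Direct evaluation gives
\[
\psi(1)=1+15-39+34-10=1,\qquad \psi'(1)=1+60-195+204-70=0,
\]
and the same direct evaluation settles $\psi''(1)$ and $\psi'''(1)$.

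The main obstacle is the constant bookkeeping for $D_X^2M_\eta$ and $\partial_t^2R_j$, in particular the $\vartheta_j'''$ contribution. The remaining items, $\delta_j=0$ and the tangency condition, are immediate.
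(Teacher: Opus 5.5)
Your route is essentially the paper's: the split $F_{j,\eta}=M_\eta+R_j$, product-rule bounds on $q_\eta$ (the paper carries these out and gets $\norm{DM_\eta}_{\F\to\F}\le10$ and $\norm{D^2M_\eta}_{(\F,\F)\to\F}\le34$), chain-rule formulas for $R_j'$ and $R_j''$ in terms of $\psi',\psi'',\psi'''$ and $W',W'',W'''$, and the endpoint identities of $\psi$ for $C^2$ regularity. For tangency the paper differentiates the rank-preserving curve $s\mapsto e^{s\Omega}Ye^{-s\Omega}$ instead. Your projector check would work, but both facts you state are false as written. For $Y=E_{11}+E_{22}$ one has $\Omega Y=E_{31}$ and $Y\Omega=-E_{13}$, neither of which lies in $\operatorname{span}\{\boldsymbol e_1,\boldsymbol e_2\}$ on the side you claim. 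The correct facts are the swapped ones: $\Omega Y(I-VV^\top)=\Omega USV^\top(I-VV^\top)=0$ and $(I-UU^\top)Y\Omega=0$. These give the same conclusion.

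There are also two numerical points. First, $W''=2(\pi/4)^2(\boldsymbol\mu\boldsymbol\mu^\top-\boldsymbol\nu\boldsymbol\nu^\top)$, so $\norm{W''}_{\F}=2\sqrt2(\pi/4)^2$, not $2(\pi/4)^2$; likewise $\norm{W'''}_{\F}=4\sqrt2(\pi/4)^3$. Second, and more important, the tally you postpone is the actual content of $\kappa=123$, and it is not automatic. Suppose you bound $\psi'''(x)=360x-2340x^2+4080x^3-2100x^4$ by the sum of its absolute coefficients, $8880$. Then the $\tfrac1{16}\psi'''W'$ term of $R_j''$ alone is about $616$. You need a sharper estimate. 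The paper uses $\max|\psi'''|\le1110$, $\max|\psi''|\le15$, and $\max|\psi'|=\beta$, which gives a total of about $122.2<123$ (a tight margin). Alternatively, the factorization $\psi'''(x)=60x(1-x)(35x^2-33x+6)$ gives $|\psi'''|\le120$ and a total near $53$. With these bounds filled in, together with $\eps_j<1$ for $B_F=5$, your argument is complete.
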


\begin{proof}
For a rank-two matrix $Y$, the curve
\(s\mapsto e^{s\Omega}Ye^{-s\Omega}\) remains rank two and has derivative
$\Omega Y-Y\Omega$ at $s=0$.  Hence
$M_\eta(Y)\in T_Y\mathcal M_2$.
Moreover,
\[
 \norm{W'}_{\F}=\sqrt2\,\frac\pi4,
 \qquad |\vartheta_j'|\le\frac\beta {h_j},
\]
and therefore \(\norm{R_j(t)}_{\F}\le\eps_j\).

Write $r:=\norm X_{\F}$, $s:=(1+r^2)^{1/2}$,
$a_\eta(X):=1+\eta\tanh(x_{31})$, and $g(X):=s^{-1}$, so that
$q_\eta=a_\eta g$.  Since $|a_\eta|\le2$,
$\norm{Da_\eta}\le1$, $\norm{D^2a_\eta}<1$,
$\norm{Dg}\le r/s^3$, and
$\norm{D^2g}\le(1+4r^2)/s^5$, the product rule gives
\begin{align*}
 \norm{Dq_\eta}&\le\frac1s+\frac{2r}{s^3},&
 \norm{D^2q_\eta}&\le
 \frac1s+\frac{2r}{s^3}+\frac{2(1+4r^2)}{s^5}.
\end{align*}
Together with $|q_\eta(X)|\le2s^{-1}$ and
$\norm{\Omega X-X\Omega}_{\F}\le2r$, this yields
\begin{align*}
 \norm{M_\eta(X)}_{\F}&\le4,&
 \norm{DM_\eta(X)}_{\F\to\F}&\le10,\\
 \norm{D^2M_\eta(X)}_{(\F,\F)\to\F}
 &\le2r\norm{D^2q_\eta}+4\norm{Dq_\eta}\le34<36,
\end{align*}
where $r/s\le1$, $r/s^3\le1$, and $(1+4r^2)/s^4\le4$ were used.
The polynomial in \eqref{eq:psi-def} satisfies
\[
 \psi'(x)=(1-x)^3(70x^3+6x^2+3x+1)\ge0,
 \qquad 0\le x\le1,
\]
$\psi''(x)=-60x^2(x-1)^2(7x-3)$.  In addition,
$\psi(0)=0$ and $\psi(1)=1$.
The maximum of $\psi'$ occurs at $x=3/7$ and equals the value $\beta$ in
\eqref{eq:beta}.  Moreover,
$\psi'(0)=1$, $\psi'(1)=0$, and
$\psi''(0)=\psi''(1)=\psi'''(0)=\psi'''(1)=0$.  These identities make the three
pieces of $R_j$ twice continuously differentiable.  On the switch-off
interval, put $x:=(t-4jh_j)/(4h_j)$.  Direct differentiation gives
\begin{align}
 R_j'(t)
 &=h_j\left[
 \frac14\psi''(x)W'(4\psi(x))
 +\bigl(\psi'(x)\bigr)^2W''(4\psi(x))\right],
 \label{eq:R-first-derivative}\\
 R_j''(t)
 &=\frac1{16}\psi'''(x)W'(4\psi(x))
 +\frac34\psi'(x)\psi''(x)W''(4\psi(x))
 +\bigl(\psi'(x)\bigr)^3W'''(4\psi(x)).
 \label{eq:R-second-derivative}
\end{align}
Moreover,
\[
 \norm{W'}_{\F}=\sqrt2\frac\pi4,\quad
 \norm{W''}_{\F}=2\sqrt2\left(\frac\pi4\right)^2,\quad
 \norm{W'''}_{\F}=4\sqrt2\left(\frac\pi4\right)^3,
\]
while $\max|\psi''|\le15$, $\max|\psi'''|\le1110$, and
$\max|\psi'|=\beta$.  Hence
\begin{align*}
 \norm{R_j'(t)}_{\F}
 &\le h_j\left[
 \frac{15}{4}\sqrt2\frac\pi4
 +\beta^2 2\sqrt2\left(\frac\pi4\right)^2\right]
 <9h_j,\\
 \norm{R_j''(t)}_{\F}
 &\le \frac{1110}{16}\sqrt2\frac\pi4
 +\frac{45\beta}{4}2\sqrt2\left(\frac\pi4\right)^2
 +\beta^3 4\sqrt2\left(\frac\pi4\right)^3
 <123,\\
 D_X\partial_tF_{j,\eta}&=0.
\end{align*}
Before the switch-off interval,
$R_j'=h_jW''(t/h_j)$ and $R_j''=W'''(t/h_j)$; afterwards both
derivatives vanish.  The endpoint identities above make these formulas
agree at the two joins.
Since $\norm{R_j}_{\F}<1$ for $j\ge3$, the constants in
\eqref{eq:fixed-constants} follow.
\end{proof}

\section{The error in the first four-step block}
\label{sec:defect}

All $O(\cdot)$ and $o(\cdot)$ estimates in this section are uniform in
$\ell\in\{0,1,2,3\}$, $i\in\{1,\ldots,5\}$, and
$\eta\in\{+1,-1\}$.

Along the sequence $h_j\to0$, equation
\eqref{eq:first-block-local-error} and the identity
$Y^0_{j,+}=Y^0_{j,-}$ give
\begin{equation}\label{eq:tau-difference-decomposition}
\begin{aligned}
 \tau_{0,j,+}-\tau_{0,j,-}
 ={}&\Bigl(\Psi^{[4]}_{h_j,+}(Y_{j,+}^0)
           -\Psi^{[4]}_{h_j,-}(Y_{j,-}^0)\Bigr)\\
 &-\Bigl(\Phi_{j,+}(4h_j;0,Y_{j,+}^0)
          -\Phi_{j,-}(4h_j;0,Y_{j,-}^0)\Bigr).
\end{aligned}
\end{equation}
Set
\begin{equation}\label{eq:H-def}
 H:=\Omega E_{11}-E_{11}\Omega=E_{31}+E_{13}.
\end{equation}

\begin{lemma}[Ranks and projection matrices]
\label{lem:rank-projectors}
For step $\ell\in\{0,1,2,3\}$ of the first four-step block, let
$P^{h_j}_{\ell,\eta}$ be the row-space projector at the beginning of
the step, $Q^{h_j}_{\ell,\eta}$ the column-space projector after
$K_1$, and $P^{h_j}_{\ell+1,\eta}$ the row-space projector after $L$.
For all sufficiently large $j$, every K and L factor occurring in
these four steps has column rank two throughout its substep, and every
S factor is nonsingular.  Moreover,
\begin{align}
 P^{h_j}_{\ell,\eta}
 &=E_{11}+W(\ell)+O(h_j),\notag\\
 Q^{h_j}_{\ell,\eta}
 &=E_{11}+W(\ell+\tfrac12)+O(h_j),\notag\\
 P^{h_j}_{\ell+1,\eta}
 &=E_{11}+W(\ell+1)+O(h_j),
 \label{eq:projector-limits}
\end{align}
\end{lemma}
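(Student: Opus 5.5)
The plan is to prove the statement by induction over the twenty substeps of the first block, following the factors in a desingularized, rescaled form (the factor equations of Appendix~A). Write $\theta:=(t'-t_\ell)/h_j$ for the fast time inside step $\ell$, and keep the large singular direction and the small one separate. In every K (resp.\ L) substep the factor has two columns $k_1,k_2$, where $k_1$ is the image of the large direction. I decompose the second column as
\[
 k_2=c\,k_1+h_j^3\,w,\qquad w\perp k_1 .
\]
The column space is then $\operatorname{span}(k_1,w)$, and its projector is $k_1k_1^\top/\norm{k_1}^2+ww^\top/\norm{w}^2$. Column rank two is equivalent to $w\neq0$, so it suffices to show $\norm w\ge c_0>0$ uniformly and to identify $w/\norm w$ up to $O(h_j)$. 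The induction hypothesis at the start of step $\ell$ is the first line of \eqref{eq:projector-limits}, together with a core of the form $\operatorname{diag}(1,h_j^3)$ up to relative $O(h_j)$ perturbations in the appropriate scaled sense. At $\ell=0$ this holds exactly, because $Y^0_{j,\eta}=E_{11}+h_j^3E_{22}$ and $W(0)=E_{22}$.

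For the large component, Proposition~\ref{prop:family-bounds} gives $\norm{F_{j,\eta}}_{\F}\le5$, so over a substep of length $O(h_j)$ every factor moves by $O(h_j)$. Hence $k_1=e_1+O(h_j)$, and likewise $v_1=e_1+O(h_j)$. For the small component I compute $\dot k_2=F(t',X)v_2$ with $v_2=\boldsymbol\nu(\ell)+O(h_j)$ and split $F=M_\eta+R_j$.
\begin{itemize}
\item The term $M_\eta(X)v_2=q_\eta(\Omega k_2-XΩv_2)$ contains the order-one piece $-q_\eta k_1(v_1^\top\Omega v_2)$. This piece lies along $k_1$, so it only changes $c$, by $O(h_j)$.
\item The remaining part of the $M_\eta$ term is $O(h_j^3)+O(h_j)\cdot O(h_j)$ after projecting orthogonally to $k_1$. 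I must check that the $O(h_j)$ tilt of $k_1$ toward $e_3$ does not generate an $O(h_j^2)$ orthogonal contribution.
\item Differentiating the Gram--Schmidt decomposition, $\dot k_1$ feeds back into $w$ only through terms of size $c\,\dot k_1$ and $(k_2\cdot\dot k_1)$. Both are $O(h_j)\cdot O(1)$ in the direction orthogonal to $k_1$, which is still too large by one factor. The fix I would try first is the exact identity $M_\eta(X)=q_\eta\,\frac{d}{ds}e^{s\Omega}Xe^{-s\Omega}$: it lets me conjugate by the slow rotation $e^{\sigma(t')\Omega}$ with $\dot\sigma=q_\eta$. In the rotated frame the $M_\eta$ contribution to the orthogonal part is $O(h_j^4)$, and undoing the rotation changes the projectors only by $O(h_j)$.
\item The forcing gives $R_jv_2=h_j^2W'(\theta)\boldsymbol\nu(\ell)+O(h_j^3)$ by \eqref{eq:periodic-forcing}.
\end{itemize}
The rescaled orthogonal part therefore satisfies
\[
 \frac{dw}{d\theta}=W'(\theta)\boldsymbol\nu(\ell)+O(h_j).
\]
For the K substep, integrated over $\theta\in[\ell,\ell+\tfrac12]$ with fixed $V_0$, the leading term gives $w(\ell+\tfrac12)=\boldsymbol\nu(\ell)+\int_\ell^{\ell+1/2}W'(\theta)\boldsymbol\nu(\ell)\,d\theta$. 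This vector is $W(\ell+\tfrac12)\boldsymbol\nu(\ell)+(I-W(\ell))\boldsymbol\nu(\ell)$, i.e.\ $\cos(\pi/8)\boldsymbol\nu(\ell+\tfrac12)$. It is nonzero, of norm bounded below, and spans the range of $W(\ell+\tfrac12)$. This gives the $Q$ line of \eqref{eq:projector-limits} and rank two throughout $K_1$.

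For the S substeps I treat the $2\times2$ core in the same scaling. Write $S=\begin{pmatrix}s_{11}&s_{12}\\ s_{21}&s_{22}\end{pmatrix}$ with $s_{11}=1+O(h_j)$ and $s_{22}=h_j^3\sigma$. The entries $s_{12},s_{21}$ may be $O(h_j)$, but $\det S=h_j^3(\sigma s_{11})-s_{12}s_{21}$ must be controlled. I would work with the Schur complement $s_{22}-s_{21}s_{12}/s_{11}$, scaled by $h_j^{-3}$. In the limit this scaled quantity obeys $d\sigma/d\theta=-\boldsymbol\nu(\cdot)^\top W'(\theta)\boldsymbol\nu(\cdot)$, the sign reflecting the backward S flow. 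Its solution stays bounded away from zero over a half step because the rotation angle is at most $\pi/8$. This gives nonsingularity of every transferred core.

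The L substep is handled in the same way as the K substep, with the roles of rows and columns exchanged and the column basis $U_1$ fixed. Its rescaled orthogonal row vector again integrates $W'(\theta)$ applied to the current small direction over $[\ell,\ell+1]$. It lands on a nonzero multiple of $\boldsymbol\nu(\ell+1)$, which yields the third line of \eqref{eq:projector-limits}. The next step then starts from the induction hypothesis with $\ell+1$. Because there are only twenty substeps, the $O(h_j)$ errors accumulate harmlessly, and every constant is uniform in $\ell,i,\eta$.

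The main obstacle is the S/L coupling at scale $h_j^3$ against $O(h_j)$ off-diagonal terms. I must show that no $O(h_j^2)$ term survives in the component orthogonal to the large direction; otherwise it would dominate $h_j^3$ and destroy both the rank statement and the projector limits. The rotation-frame identity for $M_\eta$ above is the tool I would rely on, applied separately to each of the five subflows using the projected form of $M_\eta$. If it does not close, I would fall back on a direct first-order expansion of the factor equations in $h_j$ with the $h_j^{-3}$-rescaled Schur complement variables.
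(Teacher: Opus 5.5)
Your plan is correct, and its skeleton matches the paper's. Your $w$ and $c$ are the paper's $\boldsymbol w_K\times\boldsymbol k_1/\norm{\boldsymbol k_1}_2^2$ and $p_K/\norm{\boldsymbol k_1}_2^2$. Your scaled Schur complement is $\Delta_S/s_{11}$. Your leading-order integrals reproduce the $h=0$ cycle $\mathcal B(\ell)\to\mathcal B(\ell+\tfrac12)\to\mathcal B(\ell+1)$, with margins coming from the angle bound $\pi/8$.

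What differs is how you remove the dangerous $h_j^{-2}$ contributions of $M_\eta$. The paper does this infinitesimally. It uses $(A\boldsymbol a)\times\boldsymbol b+\boldsymbol a\times(A\boldsymbol b)=(\operatorname{tr}(A)I-A^\top)(\boldsymbol a\times\boldsymbol b)$ with $\operatorname{tr}(V^\top\Omega V)=0$, and Jacobi's formula for $\det S$, to obtain closed ODEs with no negative powers of $h$. It then transfers the $h=0$ margins and projectors to $h=h_j$ by continuous (indeed $C^3$) dependence on a compact tube.

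You do it in integrated form. Setting $K=e^{\sigma\Omega}\widetilde Ke^{-\sigma B_V}$ and $S=e^{-\sigma B_U}\widetilde Se^{\sigma B_V}$ with $\dot\sigma=q_\eta$ absorbs $M_\eta$ exactly. Because the $2\times2$ factors lie in $SO(2)$, this preserves $\norm{\boldsymbol k_1\times\boldsymbol k_2}_2$ and $\det S$. Since $\sigma=O(h_j)$, it moves column spaces by only $O(h_j)$, and it leaves $\widetilde K,\widetilde S$ driven by the forcing alone up to $O(h_j^4)$. This is the same mechanism the paper uses in Lemma~\ref{lem:post-switch-rank}.

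The argument closes, so your fallback is not needed. In particular, the $c\,\dot{\boldsymbol k}_1$ feedback you bound as ``too large by one factor'' actually cancels the $q_\eta c\,\Omega\boldsymbol k_1$ term of $\dot{\boldsymbol k}_2$ exactly; the rotating frame simply encodes that cancellation.

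Your route is more explicit and avoids the compactness argument, but it yields only the $O(h_j)$ statement. The paper's smooth-dependence setup also delivers the $\eta$-independent first-order projector coefficients and the $C^3$ block map used later.

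Two small repairs are needed:
\begin{itemize}
\item In the S substeps, $s_{12}$ and $s_{21}$ are both of order $h_j$, so $s_{22}$ is generally of order $h_j^2$, not $h_j^3\sigma$. This is harmless once you pass to the Schur complement, whose limit equation uses $\operatorname{adj}S=\operatorname{diag}(0,1)+O(h_j)$.
\item You should carry out the $K_2$ substep explicitly. It is what returns the scaled small direction to $\boldsymbol\nu(\ell+1)$ with norm one, and hence restores your induction hypothesis for the next step.
\end{itemize}
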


\begin{proof}
Subsection~\ref{app:positive-rank-margins} computes the rank margins for
the $h^0$ factor cycle.  Lemma~\ref{lem:desingularized-rank-neighborhood}
transfers them to $h=h_j>0$, and
Lemma~\ref{lem:desingularized-block-map} then gives the three projector
expansions.

For
$K=[\boldsymbol k_1,\boldsymbol k_2]$,
$L=[\boldsymbol\lambda_1,\boldsymbol\lambda_2]$, and
$S=(s_{iq})_{i,q=1}^2$, the required rank conditions are
\begin{equation}\label{eq:rank-conditions}
 \boldsymbol k_1\times\boldsymbol k_2\ne0,
 \qquad \det S\ne0,
 \qquad \boldsymbol\lambda_1\times\boldsymbol\lambda_2\ne0.
\end{equation}
Subsection~\ref{app:positive-rank-margins} proves fixed positive lower
bounds for the three desingularized rank quantities
$h_j^{-3}(\boldsymbol k_1\times\boldsymbol k_2)$,
$h_j^{-3}\det S$, and
$h_j^{-3}(\boldsymbol\lambda_1\times\boldsymbol\lambda_2)$.
Lemma~\ref{lem:desingularized-rank-neighborhood} transfers
these bounds to the factors at the actual stepsize $h_j$.  Therefore
\eqref{eq:rank-conditions} holds throughout all five substeps,
uniformly for all sufficiently large $j$.  Hence
$\rank K=\rank L=2$ and $\det S\ne0$.

At the three interfaces that determine the projection matrices,
Lemma~\ref{lem:desingularized-block-map} gives
\[
 V_0=[\boldsymbol e_1,\boldsymbol\nu(\ell)]+O(h_j),\quad
 U_1=[\boldsymbol e_1,\boldsymbol\nu(\ell+\tfrac12)]+O(h_j),\quad
 V_1=[\boldsymbol e_1,\boldsymbol\nu(\ell+1)]+O(h_j).
\]
Multiplication by the corresponding transpose gives
\begin{align*}
 P^{h_j}_{\ell,\eta}
 &=\bigl([\boldsymbol e_1,\boldsymbol\nu(\ell)]+O(h_j)\bigr)
   \bigl([\boldsymbol e_1,\boldsymbol\nu(\ell)]+O(h_j)\bigr)^\top
   =E_{11}+W(\ell)+O(h_j),\\
 Q^{h_j}_{\ell,\eta}
 &=\bigl([\boldsymbol e_1,\boldsymbol\nu(\ell+\tfrac12)]+O(h_j)\bigr)
   \bigl([\boldsymbol e_1,\boldsymbol\nu(\ell+\tfrac12)]+O(h_j)\bigr)^\top
   =E_{11}+W(\ell+\tfrac12)+O(h_j),\\
 P^{h_j}_{\ell+1,\eta}
 &=\bigl([\boldsymbol e_1,\boldsymbol\nu(\ell+1)]+O(h_j)\bigr)
   \bigl([\boldsymbol e_1,\boldsymbol\nu(\ell+1)]+O(h_j)\bigr)^\top
   =E_{11}+W(\ell+1)+O(h_j).
\end{align*}
This proves \eqref{eq:projector-limits}.
\end{proof}

\begin{lemma}[Difference of the four-step numerical solutions]
\label{lem:numerical-difference}
As $j\to\infty$,
\begin{equation}\label{eq:numerical-difference}
 \Psi^{[4]}_{h_j,+}(Y_{j,+}^0)-\Psi^{[4]}_{h_j,-}(Y_{j,-}^0)
 =h_j^2\left(8H+\frac{\sqrt2}{16}E_{12}\right)+O(h_j^3).
\end{equation}
\end{lemma}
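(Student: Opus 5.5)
The plan is to linearize the four-step numerical map in the difference $M_+-M_-$. The two problems share the initial value and the forcing $R_j$, and differ only through $M_+-M_-=\eta$-odd part. Write $M_\eta=M_0+\eta N$ with $M_0(X):=(\Omega X-X\Omega)/\sqrt{1+\norm X_\F^2}$ and $N(X):=\tanh(x_{31})(\Omega X-X\Omega)/\sqrt{1+\norm X_\F^2}$. Then $N(E_{11})=0$ and $DN(E_{11})[Z]=\tfrac{1}{\sqrt2}z_{31}H$, consistent with \eqref{eq:pair-design}.

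\textbf{Step 1 (per-substep expansion).} Along every substep of the first block the temporary state is $X=E_{11}+O(h_j)$, because $F$ is bounded and the elapsed time is at most $4h_j$. Therefore $N(X)=\tfrac1{\sqrt2}x_{31}H+O(h_j^2)$. The contribution of $\eta N$ to the $i$th substep over a subinterval of length $h_j/2$ or $h_j$ is the integral of $\eta\,\mathcal P_i N(X(t'))$, where $\mathcal P_i$ is the projection pattern of that substep: $Z\mapsto ZP_0$, $-QZP_0$, $QZ$, $-QZP_1$, $ZP_1$. Since $N=O(h_j)$ along the trajectory, this contribution is $O(h_j^2)$. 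Its leading part needs $x_{31}(t')$ only to first order, namely $x_{31}(t')=(t'-0)\,[M_0(E_{11})]_{31}+O(h_j^2)=(t'/\sqrt2)+O(h_j^2)$ (the $E_{31}$ entry of $R_j$ is zero by \eqref{eq:nu-W}). This entry must be tracked through the numerical substeps, where the projected right-hand sides change it. I will compute $x_{31}$ substep by substep using the projector limits of Lemma~\ref{lem:rank-projectors}. Because $H=E_{31}+E_{13}$ and $E_{11}+W(\theta)$ fixes $e_1$ and annihilates $e_3$ only through the $W$ component, the projected first-order drift in $x_{31}$ takes definite values depending on $W(\ell)$, $W(\ell+\tfrac12)$, $W(\ell+1)$.

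\textbf{Step 2 (propagation).} The difference created in one substep is propagated by the remaining substeps of the block. Because each flow is Lipschitz with constant $12$ over time $O(h_j)$, this propagation is the identity up to $O(h_j)$ relative error, hence $O(h_j^3)$. The exception is the nonlinear interaction with the small-singular-value directions: QR bases and the projectors $P,Q$ may depend on the state with $O(h_j^{-3})$ sensitivity. To control this, I will use the desingularized block map of Lemma~\ref{lem:desingularized-block-map}. The difference between the two problems enters its data as an $O(h_j^2)$ perturbation, smooth in the desingularized variables, so the projectors of the $+$ and $-$ runs agree to $O(h_j^2)$ after rescaling. I expect the resulting change in the projected $E_{12}$ component to be precisely the $\tfrac{\sqrt2}{16}E_{12}$ term.

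\textbf{Step 3 (summation and main obstacle).} Summing over the twenty substeps, with $P_\ell, Q_\ell$ replaced by their limits $E_{11}+W(\cdot)$, gives a sum of elementary integrals $\int t'\,dt'$ multiplied by explicit matrices $H(E_{11}+W)$, $(E_{11}+W)H(E_{11}+W)$, and so on. Since $He_1=e_3$ and $W(\theta)e_3=\sin(\pi\theta/4)\boldsymbol\nu(\theta)$, these matrices reduce to trigonometric values at quarter and eighth multiples of $\pi$. Collecting them should yield $8H$ from the unprojected part and $\tfrac{\sqrt2}{16}E_{12}$ from the interaction of the projector differences. The main obstacle is Step 2. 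The sensitivity of the factor spaces to the $O(h_j^2)$ state difference is exactly the mechanism of the counterexample, so it cannot be dismissed by a Lipschitz bound. It must be computed from the linearization of the desingularized block map in Appendix~A, and I would first try to verify that this linearization, applied to the $\eta$-odd forcing, is explicit at $h=0$.
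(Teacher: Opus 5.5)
There is a genuine gap, and it is in where you place the $\frac{\sqrt2}{16}E_{12}$ term. In Step 2 you state, correctly, that the projectors of the $+$ and $-$ runs agree to $O(h_j^2)$. The paper gets this from Appendix A: every $q_\eta$ enters the desingularized equations multiplied by $h$, and $q_+(E_{11})=q_-(E_{11})=1/\sqrt2$, so the first $h$-jets of the three projectors do not depend on $\eta$. But then the part of the difference caused by $\mathcal P^{h_j}_{\ell,+,i}-\mathcal P^{h_j}_{\ell,-,i}$ is $O(h_j^2)$ times a bounded field times a substep length $O(h_j)$, which is $O(h_j^3)$. The $O(h_j^2)$ state differences are also carried forward unchanged up to $O(h_j^3)$. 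So Step 2 contributes nothing at order $h_j^2$, and no linearization of the desingularized block map is needed beyond the equality of first jets. The ``main obstacle'' you name is therefore not an obstacle, and your expectation that it supplies the $E_{12}$ term contradicts your own $O(h_j^2)$ bound. Following that plan, you would either find nothing there or count the term twice. Likewise, $8H$ is not an ``unprojected part.''

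All of $8H+\frac{\sqrt2}{16}E_{12}$ comes from your Step 3 sum. That sum uses the $\eta$-independent limit operators $\mathcal P_{\ell,i}$ built from $E_{11}+W(\ell)$, $E_{11}+W(\ell+\frac12)$, $E_{11}+W(\ell+1)$. It requires the $31$-entry of the first-order numerical state to be tracked through the substeps; it is not the exact-flow value $t'/\sqrt2$. In units of $h_j/(2\sqrt2)$, the three $Q$-substeps $S_1$, $L$, $S_2$ change it by $-\sigma_\ell$, $+2\sigma_\ell$, $-\sigma_\ell$, where $\sigma_\ell=\sin^2\frac{\pi(2\ell+1)}{8}$.

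Since $D(M_+-M_-)(E_{11})[Z]=\sqrt2\,z_{31}H$, step $\ell$ contributes $\sum_i\alpha_i m_i\,\mathcal P_{\ell,i}(H)$. Here $m_i=\ell+\sum_{p<i}\alpha_pc_p+\frac12\alpha_ic_i$ and $(c_1,\dots,c_5)=([\mathcal P_{\ell,i}(H)]_{31})_i=(1,-\sigma_\ell,\sigma_\ell,-\sigma_\ell,1)$.

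\begin{itemize}
\item The $\ell$ in $m_i$ gives $\ell H$, by the first-order identity $\sum_i\alpha_i\mathcal P_{\ell,i}(H)=H$. Over the block this is $6H$.
\item The remainder gives $\frac12H$ per step, plus an $E_{13}$ term that cancels over the block.
\item It also gives the $E_{12}$ coefficient $\frac1{16}\cos^2\frac{\pi(2\ell+1)}{8}\bigl(\sin\frac{\pi(\ell+1)}{2}-\sin\frac{\pi\ell}{2}\bigr)$.
\end{itemize}

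Without the weight $\cos^2\frac{\pi(2\ell+1)}{8}=1-\sigma_\ell$, these $E_{12}$ terms would telescope to zero over the cycle. With it they sum to $\frac18\cos\frac\pi4=\frac{\sqrt2}{16}$. This non-telescoping sum over the rotating, sign-independent projectors is the actual mechanism, not a sensitivity of the factor spaces to the $\pm$ difference. It is the computation your proposal still has to carry out, as the paper does in its table for $\ell=0,\dots,3$.
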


\begin{proof}
We first compare the projector expansions for $\eta=+1$ and $\eta=-1$.
We then expand each of the twenty exact substeps through order $h_j^2$.
Finally, we sum the four phase contributions.

For step $\ell\in\{0,1,2,3\}$, let
$X^{h_j}_{\ell,\eta,0}$ be its input and let
$X^{h_j}_{\ell,\eta,i}$ be the output after substep $i$, where
\begin{equation}\label{eq:twenty-state-chain}
 X^{h_j}_{0,\eta,0}:=Y_{j,\eta}^0,
 \qquad X^{h_j}_{\ell+1,\eta,0}:=X^{h_j}_{\ell,\eta,5},
 \qquad
 \Psi^{[4]}_{h_j,\eta}(Y_{j,\eta}^0)=X^{h_j}_{3,\eta,5}.
\end{equation}
Using the three projectors from Lemma~\ref{lem:rank-projectors}, define
the corresponding five operators $\mathcal P^{h_j}_{\ell,\eta,i}$ by
\begin{equation}\label{eq:actual-five-actions}
\begin{aligned}
 \mathcal P^{h_j}_{\ell,\eta,1}(Z)&:=ZP^{h_j}_{\ell,\eta},
 &\mathcal P^{h_j}_{\ell,\eta,2}(Z)&:=-Q^{h_j}_{\ell,\eta}ZP^{h_j}_{\ell,\eta},
 &\mathcal P^{h_j}_{\ell,\eta,3}(Z)&:=Q^{h_j}_{\ell,\eta}Z,\\
 \mathcal P^{h_j}_{\ell,\eta,4}(Z)&:=-Q^{h_j}_{\ell,\eta}ZP^{h_j}_{\ell+1,\eta},
 &\mathcal P^{h_j}_{\ell,\eta,5}(Z)&:=ZP^{h_j}_{\ell+1,\eta}.
\end{aligned}
\end{equation}
At $E_{11}$, the two right-hand sides satisfy
\begin{equation}\label{eq:base-field-difference}
 F_{j,+}(t,E_{11})-F_{j,-}(t,E_{11})
 =M_+(E_{11})-M_-(E_{11})
 =\frac1{\sqrt2}H-\frac1{\sqrt2}H=0.
\end{equation}
Lemma~\ref{lem:desingularized-block-map} gives Taylor expansions of the
three projectors in the desingularized variables at stepsize $h_j$.
Every occurrence of $q_\eta$ in those equations is multiplied by $h_j$,
and
$\left.\partial_h(hq_\eta(X(h)))\right|_{h=0}
=q_\eta(E_{11})=1/\sqrt2$ for $\eta\in\{+1,-1\}$.
Thus $Dq_\eta$ first enters at order $h_j^2$, and the constant and linear
projector coefficients are the same for both values of $\eta$; see
\eqref{eq:appendix-equal-first-jets}--
\eqref{eq:appendix-projector-derivative-equality}.  Consequently,
\begin{align*}
 P^{h_j}_{\ell,\eta}
 &=E_{11}+W(\ell)+h_jP^{(1)}_{\ell}+O(h_j^2),\\
 Q^{h_j}_{\ell,\eta}
 &=E_{11}+W(\ell+\tfrac12)+h_jQ^{(1)}_{\ell}+O(h_j^2),\\
 P^{h_j}_{\ell+1,\eta}
 &=E_{11}+W(\ell+1)+h_jP^{(1)}_{\ell+1}+O(h_j^2),
\end{align*}
where \(P^{(1)}_{\ell},Q^{(1)}_{\ell},P^{(1)}_{\ell+1}\) do not
depend on \(\eta\).  Subtraction therefore gives
\begin{equation}\label{eq:projector-sign-difference}
 P^{h_j}_{\ell,+}-P^{h_j}_{\ell,-}=O(h_j^2),\qquad
 Q^{h_j}_{\ell,+}-Q^{h_j}_{\ell,-}=O(h_j^2),\qquad
 P^{h_j}_{\ell+1,+}-P^{h_j}_{\ell+1,-}=O(h_j^2).
\end{equation}
For example, the second identity in
\eqref{eq:actual-five-actions} and
\eqref{eq:projector-sign-difference} give
\[
 \mathcal P^{h_j}_{\ell,+,2}(Z)-\mathcal P^{h_j}_{\ell,-,2}(Z)
 =-(Q^{h_j}_{\ell,+}-Q^{h_j}_{\ell,-})
 ZP^{h_j}_{\ell,+}
 -Q^{h_j}_{\ell,-}Z
 (P^{h_j}_{\ell,+}-P^{h_j}_{\ell,-})
 =O(h_j^2\norm Z_{\F}).
\]
The other four identities in \eqref{eq:actual-five-actions} give
\begin{equation}\label{eq:action-sign-difference}
 \max_{1\le i\le5}
 \norm{\mathcal P^{h_j}_{\ell,+,i}
       -\mathcal P^{h_j}_{\ell,-,i}}_{\F\to\F}
 =O(h_j^2).
\end{equation}
Substitution of \eqref{eq:projector-limits} into these five formulas
gives directly
\begin{equation}\label{eq:five-actions}
\begin{aligned}
 \mathcal P^{h_j}_{\ell,\eta,1}(Z)
 &=Z\bigl(E_{11}+W(\ell)\bigr)+O(h_j\norm Z_{\F}),\\
 \mathcal P^{h_j}_{\ell,\eta,2}(Z)
 &=-\bigl(E_{11}+W(\ell+\tfrac12)\bigr)
 Z\bigl(E_{11}+W(\ell)\bigr)+O(h_j\norm Z_{\F}),\\
 \mathcal P^{h_j}_{\ell,\eta,3}(Z)
 &=\bigl(E_{11}+W(\ell+\tfrac12)\bigr)Z
 +O(h_j\norm Z_{\F}),\\
 \mathcal P^{h_j}_{\ell,\eta,4}(Z)
 &=-\bigl(E_{11}+W(\ell+\tfrac12)\bigr)
 Z\bigl(E_{11}+W(\ell+1)\bigr)+O(h_j\norm Z_{\F}),\\
 \mathcal P^{h_j}_{\ell,\eta,5}(Z)
 &=Z\bigl(E_{11}+W(\ell+1)\bigr)+O(h_j\norm Z_{\F}).
\end{aligned}
\end{equation}
For $i=1,\ldots,5$, denote the displayed matrix product before the
$O(h_j\norm Z_{\F})$ term by $\mathcal P_{\ell,i}(Z)$ and set
\begin{equation}\label{eq:substep-lengths}
 (\alpha_1,\ldots,\alpha_5):=(1/2,1/2,1,1/2,1/2).
\end{equation}
Let $I^{h_j}_{\ell,i}$ be the time interval of substep $i$ in
\eqref{eq:five-substeps}, shifted to the step
$[\ell h_j,(\ell+1)h_j]$.  If $X^{h_j}_{\ell,\eta,i}(s)$ denotes its solution
inside that interval, then the definition of the five exact substeps gives
\begin{align}
 &\Psi^{[4]}_{h_j,+}(Y_{j,+}^0)-\Psi^{[4]}_{h_j,-}(Y_{j,-}^0)\notag\\
 &\quad=\sum_{\ell=0}^3\sum_{i=1}^5
 \int_{I^{h_j}_{\ell,i}}
 \Bigl[
 \mathcal P^{h_j}_{\ell,+,i}
 F_{j,+}\bigl(s,X^{h_j}_{\ell,+,i}(s)\bigr)
 -\mathcal P^{h_j}_{\ell,-,i}
 F_{j,-}\bigl(s,X^{h_j}_{\ell,-,i}(s)\bigr)
 \Bigr]\,ds.
 \label{eq:numerical-integral-definition}
\end{align}
For every $(\ell,i)$, the part of the integrand in
\eqref{eq:numerical-integral-definition} caused by the difference of
the two actions satisfies
\begin{align}
 &\norm{\int_{I^{h_j}_{\ell,i}}
 \bigl(\mathcal P^{h_j}_{\ell,+,i}
       -\mathcal P^{h_j}_{\ell,-,i}\bigr)
 F_{j,+}\bigl(s,X^{h_j}_{\ell,+,i}(s)\bigr)\,ds}_{\F}
 \notag\\
 &\qquad\le |I^{h_j}_{\ell,i}|\,
 O(h_j^2)\sup_{s,X}\norm{F_{j,+}(s,X)}_{\F}
 =O(h_j^3).
 \label{eq:projector-difference-order}
\end{align}
On the first four steps, $\norm{R_j(t)}_{\F}=O(h_j^2)$ and
$\norm{\partial_tR_j(t)}_{\F}=O(h_j)$.
Hence the direct contribution of $R_j$ over a substep of length
$O(h_j)$, as well as every time-dependent Taylor term, is $O(h_j^3)$.
Taylor expansion of each integral in
\eqref{eq:numerical-integral-definition} gives, for $i=1,\ldots,5$,
\begin{align}
 X^{h_j}_{\ell,\eta,i}-X^{h_j}_{\ell,\eta,i-1}
 ={}&\alpha_i h_j\mathcal P^{h_j}_{\ell,\eta,i}
      \bigl(M_\eta(X^{h_j}_{\ell,\eta,i-1})\bigr)\notag\\
 &+\frac{\alpha_i^2h_j^2}{2}
 \mathcal P^{h_j}_{\ell,\eta,i}\!\left(
 DM_\eta(X^{h_j}_{\ell,\eta,i-1})\left[
 \mathcal P^{h_j}_{\ell,\eta,i}
 \bigl(M_\eta(X^{h_j}_{\ell,\eta,i-1})\bigr)
 \right]\right)
 +O(h_j^3).
 \label{eq:one-substep-taylor}
\end{align}
The first estimate below follows from the uniform field bound over the
twenty substeps.  For the second, suppose at one substep input that
$X_\eta=E_{11}+O(h_j)$ and $X_+-X_-=O(h_j^2)$.
Since $M_+-M_-$ vanishes at $E_{11}$ and is Lipschitz near that matrix,
$\norm{M_+(X_+)-M_-(X_-)}_{\F}
\le C\norm{X_+-X_-}_{\F}+C\norm{X_--E_{11}}_{\F}=O(h_j)$.
Together with \eqref{eq:action-sign-difference}, multiplication by the
substep length gives an $O(h_j^2)$ output difference.  Starting from
$Y^0_{j,+}=Y^0_{j,-}$ and repeating this implication through the finite
chain \eqref{eq:twenty-state-chain} proves
\begin{equation}\label{eq:actual-limit-estimates}
 X^{h_j}_{\ell,\eta,i}=E_{11}+O(h_j),\qquad
 X^{h_j}_{\ell,+,i}-X^{h_j}_{\ell,-,i}=O(h_j^2).
\end{equation}
Moreover, direct substitution of the five operators in
\eqref{eq:five-actions} gives
\begin{equation}\label{eq:first-order-identity}
\begin{aligned}
 \sum_{i=1}^5\alpha_i\mathcal P_{\ell,i}(H/\sqrt2)
 &=\frac1{2\sqrt2}\bigl(
 H(E_{11}+W(\ell))\\
 &\qquad -(E_{11}+W(\ell+\tfrac12))H(E_{11}+W(\ell))\\
 &\qquad +2(E_{11}+W(\ell+\tfrac12))H\\
 &\qquad -(E_{11}+W(\ell+\tfrac12))H(E_{11}+W(\ell+1))\\
 &\qquad +H(E_{11}+W(\ell+1))\bigr)\\
 &=\frac1{\sqrt2}H.
\end{aligned}
\end{equation}
For $Z=(z_{ab})\in\mathbb R^{3\times3}$, direct differentiation of
\eqref{eq:q-field} gives
\begin{equation}\label{eq:centered-derivative}
 M_+(E_{11})=M_-(E_{11})=\frac1{\sqrt2}H,
 \qquad
 D(M_+-M_-)(E_{11})[Z]=\sqrt2\,z_{31}H.
\end{equation}

Substituting \eqref{eq:centered-derivative},
\eqref{eq:projector-limits}, and \eqref{eq:actual-limit-estimates} into
\eqref{eq:one-substep-taylor}, and summing according to
\eqref{eq:twenty-state-chain}, first gives
\begin{equation}\label{eq:first-order-state}
 X^{h_j}_{\ell,\eta,i}
 =E_{11}+h_j\left(
 \frac{\ell}{\sqrt2}H
 +\sum_{p\le i}\alpha_p\mathcal P_{\ell,p}(H/\sqrt2)
 \right)+O(h_j^2).
\end{equation}
Substituting \eqref{eq:first-order-state} back into
\eqref{eq:one-substep-taylor}, subtracting the two values of $\eta$,
and summing the twenty resulting equalities gives
\begin{align}
 &\frac{\Psi^{[4]}_{h_j,+}(Y_{j,+}^0)-\Psi^{[4]}_{h_j,-}(Y_{j,-}^0)}{h_j^2}
 \notag\\
 &\quad=
 \sum_{\ell=0}^3\sum_{i=1}^5
 \alpha_i\mathcal P_{\ell,i}\!\left(
 D(M_+-M_-)(E_{11})\!\left[
 \frac{\ell}{\sqrt2}H
 +
   \sum_{p<i}\alpha_p\mathcal P_{\ell,p}(H/\sqrt2)
 \right]\right)
 \notag\\
 &\qquad\quad+\frac12
 \sum_{\ell=0}^3\sum_{i=1}^5\alpha_i^2
 \mathcal P_{\ell,i}\!\left(
 D(M_+-M_-)(E_{11})
 [\mathcal P_{\ell,i}(H/\sqrt2)]\right)+O(h_j).
 \label{eq:numerical-difference-direct}
\end{align}
The terms containing $\ell H/\sqrt2$ equal
\begin{equation}\label{eq:previous-step-sum}
 \sum_{\ell=0}^3\ell
 \sum_{i=1}^5\alpha_i\mathcal P_{\ell,i}\!\left(
 D(M_+-M_-)(E_{11})[H/\sqrt2]\right)
 =\sum_{\ell=0}^3\ell H=6H.
\end{equation}
The remaining terms in \eqref{eq:numerical-difference-direct} are
explicit $3\times3$ matrix products involving only $W(\ell)$,
$W(\ell+1/2)$, and $W(\ell+1)$.  For example,
\begin{align*}
 W(0)&=E_{22},\\
 W(1/2)&=\cos^2(\pi/8)E_{22}
 +\cos(\pi/8)\sin(\pi/8)(E_{23}+E_{32})\\
 &\quad+\sin^2(\pi/8)E_{33},\\
 W(1)&=\tfrac12(E_{22}+E_{23}+E_{32}+E_{33}).
\end{align*}
Substitution into \eqref{eq:numerical-difference-direct} gives, for
$\ell=0$,
\[
 \frac12H+\left(\frac{\sqrt2}{64}+\frac1{32}\right)E_{12}
 +\left(\frac{\sqrt2}{64}+\frac1{32}\right)E_{13}.
\]
The same substitution for $\ell=1,2,3$ gives the other rows below.
\begin{equation}\label{eq:within-step-table}
\begin{array}{c|l}
\ell&\text{contribution after removing the history term in
\eqref{eq:previous-step-sum}}\\
\hline
0&\displaystyle \frac12H+(\sqrt2/64+1/32)E_{12}
 +(\sqrt2/64+1/32)E_{13}\\[1mm]
1&\displaystyle \frac12H+(-1/32+\sqrt2/64)E_{12}
 +(1/32-\sqrt2/64)E_{13}\\[1mm]
2&\displaystyle \frac12H+(-1/32+\sqrt2/64)E_{12}
 +(-1/32+\sqrt2/64)E_{13}\\[1mm]
3&\displaystyle \frac12H+(\sqrt2/64+1/32)E_{12}
 +(-1/32-\sqrt2/64)E_{13}\\[1mm]
\hline
\displaystyle\sum_{\ell=0}^3
 &\displaystyle 2H+\frac{\sqrt2}{16}E_{12}.
\end{array}
\end{equation}
The $E_{13}$ coefficients cancel, whereas the four $E_{12}$
coefficients sum to $\sqrt2/16$.  Hence
\begin{align*}
 \frac{\Psi^{[4]}_{h_j,+}(Y_{j,+}^0)-\Psi^{[4]}_{h_j,-}(Y_{j,-}^0)}{h_j^2}
 &=6H+2H+\frac{\sqrt2}{16}E_{12}+O(h_j)\\
 &=8H+\frac{\sqrt2}{16}E_{12}+O(h_j),
\end{align*}
which is \eqref{eq:numerical-difference}.
\end{proof}

\begin{lemma}[Difference of the full solutions]
\label{lem:full-flow-difference}
As $j\to\infty$,
\begin{equation}\label{eq:full-flow-difference}
 \Phi_{j,+}(4h_j;0,Y_{j,+}^0)-\Phi_{j,-}(4h_j;0,Y_{j,-}^0)
 =8h_j^2H+O(h_j^3).
\end{equation}
\end{lemma}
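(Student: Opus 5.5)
The plan is to expand both exact flows $\Phi_{j,\eta}(t;0,Y^0_{j,\eta})$ on $[0,4h_j]$ to second order in $h_j$ and subtract. Write $A_\eta(t)$ for the solution. On $[0,4h_j]$ we have $\norm{R_j(t)}_{\F}=O(h_j^2)$, and $F_{j,\eta}$ is bounded by $5$ uniformly. Hence
\[
 A_\eta(t)=Y^0+O(h_j)=E_{11}+O(h_j),
\]
since $Y^0=E_{11}+h_j^3E_{22}$. The forcing $R_j$ is the same for both signs $\eta$, so it cancels exactly from the integral form of the difference:
\[
 A_+(t)-A_-(t)=\int_0^t\bigl(M_+(A_+(s))-M_-(A_-(s))\bigr)\,ds .
\]
We therefore never need the fine structure of $W(t/h_j)$; only its size enters, through the $O(h_j)$ deviation of $A_\eta$ from $E_{11}$.

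The first step is a crude estimate. Split the integrand as
\[
 M_+(A_+)-M_-(A_-)=\bigl(M_+-M_-\bigr)(A_-)+\bigl(M_+(A_+)-M_+(A_-)\bigr).
\]
By \eqref{eq:base-field-difference} and the Lipschitz bound, the first piece is $O(\norm{A_--E_{11}}_{\F})=O(h_j)$. The second piece is bounded by $10\norm{A_+-A_-}_{\F}$ (from the bound on $DM_\eta$ in Proposition~\ref{prop:family-bounds}). Gronwall over an interval of length $4h_j$ then gives $A_+-A_-=O(h_j^2)$ on $[0,4h_j]$.

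The second step is to refine the first piece. Taylor-expand $M_+-M_-$ about $E_{11}$; the $O(h_j^2)$ error is controlled by $\norm{D^2M_\eta}\le36$. This requires the first-order expansion
\[
 A_-(s)=E_{11}+s\,M_-(E_{11})+O(h_j^2)=E_{11}+\frac{s}{\sqrt2}H+O(h_j^2),
\]
which holds because $\int_0^s R_j=O(h_j^3)$ and $M_-(A_-(s))=M_-(E_{11})+O(h_j)$. With \eqref{eq:centered-derivative} and $z_{31}=s/\sqrt2$ for $Z=sH/\sqrt2$, we get
\[
 (M_+-M_-)(A_-(s))=\sqrt2\cdot\frac{s}{\sqrt2}\,H+O(h_j^2)=sH+O(h_j^2).
\]
The second piece is $O(\norm{A_+-A_-}_{\F})=O(h_j^2)$ by the first step. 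Integrating over $[0,4h_j]$ gives
\[
 \int_0^{4h_j}sH\,ds=8h_j^2H,
\]
plus an error of $4h_j\cdot O(h_j^2)=O(h_j^3)$. This is \eqref{eq:full-flow-difference}.

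The main obstacle is making sure every error is genuinely uniform in $j$. The constants come only from the bounds in Proposition~\ref{prop:family-bounds} ($B_F=5$, $L_F=12$, $\norm{D^2M}\le36$) and from $\norm{R_j}_{\F}=O(h_j^2)$ on the first block. In particular the rapid oscillation of $R_j$, whose derivative is $O(h_j)$ but whose second derivative is $O(1)$, must never be Taylor-expanded in time; it enters only through integrals of its norm. This is why the argument uses integral identities and Gronwall rather than a Taylor series in $t$.
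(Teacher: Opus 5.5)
Your proof is correct. It reaches the same leading term as the paper, $8h_j^2\,D(M_+-M_-)(E_{11})[H/\sqrt2]=8h_j^2H$, but organizes the estimate differently.

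The paper expands each exact solution on its own,
$\Phi_{j,\eta}(4h_j;0,Y^0_{j,\eta})=E_{11}+4h_jM_\eta(E_{11})+8h_j^2DM_\eta(E_{11})[M_\eta(E_{11})]+O(h_j^3)$.
It first disposes of the forcing through the running-integral bound $\sup_{t\le4h_j}\norm{\int_0^tR_j}_{\F}=O(h_j^3)$: the direct forcing effect is $O(h_j^3)$ and its interaction with $M_\eta$ is $O(h_j^4)$. Only then does it subtract.

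You subtract first. In the integral identity for $A_+-A_-$ the common forcing cancels identically, and a Gronwall step gives $A_+-A_-=O(h_j^2)$. After that you need only the first-order expansion $A_-(s)=E_{11}+sH/\sqrt2+O(h_j^2)$ and a second-order Taylor bound for $M_+-M_-$ at $E_{11}$.

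Your route is more economical. It never needs a third-order remainder estimate for an individual trajectory. It also makes uniformity in $j$ transparent, since only the bounds on $DM_\eta$ and $D^2M_\eta$ from Proposition~\ref{prop:family-bounds} and $\norm{R_j}_{\F}=O(h_j^2)$ enter. The paper's route yields in addition the individual expansion \eqref{eq:full-flow-expansion}. That expansion mirrors the substep Taylor expansions \eqref{eq:one-substep-taylor} used for the numerical method in Lemma~\ref{lem:numerical-difference}, so the two sides of \eqref{eq:tau-difference-decomposition} are computed in parallel form.
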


\begin{proof}
On the first four steps,
\begin{equation}\label{eq:full-flow-small-terms}
 Y_{j,\eta}^0=E_{11}+h_j^3W(0),
 \qquad
 \int_0^tR_j(s)\,ds=h_j^3\bigl(W(t/h_j)-W(0)\bigr),
 \qquad 0\le t\le4h_j,
\end{equation}
and hence
\begin{equation}\label{eq:full-flow-forcing-partial-bound}
 \sup_{0\le t\le4h_j}
 \norm{\int_0^tR_j(s)\,ds}_{\F}=O(h_j^3).
\end{equation}
Moreover,
\begin{equation}\label{eq:full-flow-forcing-cycle}
 \int_0^{4h_j}R_j(s)\,ds=h_j^3\bigl(W(4)-W(0)\bigr)=0.
\end{equation}
The direct forcing contribution is therefore $O(h_j^3)$, while its
interaction with $M_\eta$ is $O(h_j^4)$.  Taylor expansion gives
\begin{align}
 \Phi_{j,\eta}(4h_j;0,Y_{j,\eta}^0)
 ={}&E_{11}+4h_jM_\eta(E_{11})
 +8h_j^2DM_\eta(E_{11})[M_\eta(E_{11})]+O(h_j^3),
 \label{eq:full-flow-expansion}\\
 &\Phi_{j,+}(4h_j;0,Y_{j,+}^0)
 -\Phi_{j,-}(4h_j;0,Y_{j,-}^0)\notag\\
 &\qquad=8h_j^2D(M_+-M_-)(E_{11})[H/\sqrt2]
 +O(h_j^3)\notag\\
 &\qquad=8h_j^2H+O(h_j^3),
\end{align}
where the last equality follows from \eqref{eq:centered-derivative}.
\end{proof}

\begin{proposition}[Error in the first four-step block]
\label{prop:four-step}
As $j\to\infty$, equivalently $h_j\to0^+$,
\begin{equation}\label{eq:four-step-defect}
 \tau_{0,j,+}-\tau_{0,j,-}
 =\frac{\sqrt2}{16}h_j^2E_{12}+O(h_j^3).
\end{equation}
\end{proposition}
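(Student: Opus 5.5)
The plan is to combine the decomposition \eqref{eq:tau-difference-decomposition} with the two preceding lemmas; no new analysis is needed. By \eqref{eq:first-block-local-error}, $\tau_{0,j,\eta}$ is the four-step numerical output minus the exact flow over $[0,4h_j]$, both started from $Y^0_{j,\eta}$. Since $Y^0_{j,+}=Y^0_{j,-}=E_{11}+h_j^3E_{22}$, the difference $\tau_{0,j,+}-\tau_{0,j,-}$ splits exactly as in \eqref{eq:tau-difference-decomposition}. It becomes the numerical difference treated in Lemma~\ref{lem:numerical-difference} minus the exact-flow difference treated in Lemma~\ref{lem:full-flow-difference}.

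I would then substitute \eqref{eq:numerical-difference} and \eqref{eq:full-flow-difference}:
\[
 \tau_{0,j,+}-\tau_{0,j,-}
 =h_j^2\Bigl(8H+\tfrac{\sqrt2}{16}E_{12}\Bigr)-8h_j^2H+O(h_j^3)
 =\tfrac{\sqrt2}{16}h_j^2E_{12}+O(h_j^3).
\]
The $8H$ terms cancel exactly. This cancellation is the point of the construction: the symmetric composition reproduces the exact second-order term $8h_j^2D(M_+-M_-)(E_{11})[H/\sqrt2]$. What survives is the $E_{12}$ component, which comes from the rotating projectors $W(\ell+\tfrac12)$ and the $\ell$-dependent $E_{12}$ coefficients in \eqref{eq:within-step-table}.

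The only point needing care is that the two $O(h_j^3)$ remainders are uniform. Both lemmas are stated along the same sequence $j\to\infty$, with constants uniform in $\eta$, as stipulated at the start of Section~\ref{sec:defect}. Their sum is therefore again $O(h_j^3)$. Rank-admissibility of the twenty substeps, which is needed for $\Psi^{[4]}_{h_j,\eta}$ to be defined at all, is supplied for large $j$ by Lemma~\ref{lem:rank-projectors}. The real difficulty is not in this proposition but in the two lemmas. These are the explicit $3\times3$ table \eqref{eq:within-step-table} and the appendix justification that the first-order projector coefficients agree for both signs of $\eta$. Here I would simply invoke them.
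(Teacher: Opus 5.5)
Your proposal is correct and is the paper's own argument: the paper proves the proposition by substituting \eqref{eq:numerical-difference} and \eqref{eq:full-flow-difference} into \eqref{eq:tau-difference-decomposition}, so the $8h_j^2H$ terms cancel and $\frac{\sqrt2}{16}h_j^2E_{12}+O(h_j^3)$ remains. Your remarks on the $\eta$-uniformity of the remainders and on rank-admissibility via Lemma~\ref{lem:rank-projectors} are consistent with the paper but add nothing those lemmas do not already provide.
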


\begin{proof}
Substitute \eqref{eq:numerical-difference} and
\eqref{eq:full-flow-difference} into
\eqref{eq:tau-difference-decomposition}.
\end{proof}

\section{From one four-step error to the global error}
\label{sec:accumulation}

The times $t_{k,j}$ and four-step errors $\tau_{k,j,\eta}$ are those in
\eqref{eq:block-objects}--\eqref{eq:block-local-error}.
Proposition~\ref{prop:four-step} gives the error difference in the
first block:
\begin{equation}\label{eq:first-block-coefficient-section-five}
 \ip{\tau_{0,j,+}-\tau_{0,j,-}}{E_{12}}
 =\frac{\sqrt2}{16}h_j^2+O(h_j^3).
\end{equation}
We first extend this formula from $k=0$ to every $0\le k<j$.  Write
$t=t_{k,j}+s$ on $[t_{k,j},t_{k+1,j}]$.  Since
$t_{k,j}=4kh_j$ and $W'(\theta+4)=W'(\theta)$, for
$0\le s\le4h_j$,
\begin{equation}\label{eq:block-periodicity}
\begin{aligned}
 R_j(t_{k,j}+s)&=h_j^2W'\left(4k+\frac{s}{h_j}\right)
 =h_j^2W'\left(\frac{s}{h_j}\right)=R_j(s),\\
 F_{j,\eta}(t_{k,j}+s,X)
 &=M_\eta(X)+R_j(t_{k,j}+s)=F_{j,\eta}(s,X).
\end{aligned}
\end{equation}
Hence the equation on $[t_{k,j},t_{k+1,j}]$, written in the variable
$s=t-t_{k,j}$, is independent of $k$; only its initial matrix
$Y_{j,\eta}^{4k}$ changes.  The following proposition shows that these
matrices remain close to $Y^0_{j,\eta}$ and that the
coefficient computed in Section~\ref{sec:defect} therefore persists in
every block.

\begin{proposition}[Uniform local error over the periodic blocks]
\label{prop:uniform-block}
For all sufficiently large $j$, uniformly for $0\le k<j$,
\begin{align}
 \norm{\tau_{k,j,\eta}}_{\F}&\le Ch_j^2,
 \label{eq:block-bound}\\
 \ip{\tau_{k,j,+}-\tau_{k,j,-}}{E_{12}}
 &=\frac{\sqrt2}{16}h_j^2+o(h_j^2).
 \label{eq:block-centered}
\end{align}
\end{proposition}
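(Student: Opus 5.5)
We prove both statements by induction on $k$. The idea is to show that every block input $Y^{4k}_{j,\eta}$ stays in a neighbourhood of $Y^0_{j,\eta}$ that is small enough, in the desingularized variables of Appendix~A, for the computations of Section~\ref{sec:defect} to carry over unchanged. Because of \eqref{eq:block-periodicity}, the block map on $[t_{k,j},t_{k+1,j}]$, written in the variable $s=t-t_{k,j}$, is the same as $\Psi^{[4]}_{h_j,\eta}$ but applied to the input $Y^{4k}_{j,\eta}$. The full flow over the block is likewise $\Phi_{j,\eta}(4h_j;0,\cdot)$.

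The natural input class is matrices $X=E_{11}+O(h_j)$ whose small part, after rescaling by $h_j^{-3}$, stays in a compact set on which the rank margins of Subsection~\ref{app:positive-rank-margins} remain positive. More precisely, the second singular value should be comparable to $h_j^3$, and its right singular vector should lie within a fixed small angle of $\boldsymbol\nu(0)$. Lemmas~\ref{lem:desingularized-rank-neighborhood} and~\ref{lem:desingularized-block-map} are stated for the desingularized factor cycle. They should therefore apply uniformly to any input in such a neighbourhood, not just to $Y^0$, with all $O(h_j)$ constants uniform over the neighbourhood.

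\textbf{Step 1: moderate drift of the large component.} Each exact substep satisfies $\norm{X_i-X_{i-1}}_{\F}\le\alpha_ih_jB_F$. Over $j$ blocks the leading part therefore drifts only by $O(jh_j)=o(1)$, since $h_j=2^{-j^2}$. In fact, the $M_\eta$-flow conjugates by $e^{s\Omega}$, up to a scalar time change. Because $E_{11}$ is not fixed by this conjugation, the leading part $E_{11}$ rotates within $\mathrm{span}\{e_1,e_3\}$ by an angle $O(jh_j)$. This rotation is still $o(1)$, so $Y^{4k}=E_{11}+o(1)$.

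\textbf{Step 2: the small singular component.} This is the step I expect to be the main obstacle: showing that the $h_j^3$ component does not degenerate or wander after $j$ periods. First I would use the exact structure. The full flow keeps the small part equal to $h_j^3W(\vartheta)$, conjugated by the rotation. I would then aim for a contraction or neutrality estimate for the desingularized block map near its $h=0$ fixed cycle, which would bound the drift in the rescaled variables by $O(jh_j)$. If that fails, the fallback is a crude Gronwall bound in rescaled variables with constant $e^{Cjh_j}\to1$. Either way the rescaled inputs stay within $o(1)$ of the initial desingularized data, inside the rank-margin neighbourhood.

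\textbf{Step 3: the two estimates.} With the inputs under control, I would rerun the Taylor arguments of Lemmas~\ref{lem:numerical-difference} and~\ref{lem:full-flow-difference} with base point $Y^{4k}_{j,\eta}$ in place of $E_{11}+h_j^3E_{22}$. Single-step consistency, meaning that both flows agree through order $h_j$, then gives \eqref{eq:block-bound}. For \eqref{eq:block-centered}, the coefficients computed in \eqref{eq:within-step-table} depend continuously on the base data: on the limit point $E_{11}$, on the derivatives $DM_\pm$, and on the limiting projectors $W(\ell)$ and $W(\ell+\tfrac12)$. These data move only by $o(1)$. Moreover, the two inputs $Y^{4k}_{j,+}$ and $Y^{4k}_{j,-}$ differ by $O(jh_j^2)$, and this difference enters the $\tau$ difference only through Lipschitz terms multiplied by $h_j$, i.e.\ at order $o(h_j^2)$. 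Pairing the result with $E_{12}$ yields $\frac{\sqrt2}{16}h_j^2+o(h_j^2)$, uniformly in $k<j$.
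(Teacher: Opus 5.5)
Your overall architecture matches the paper's. You keep the block starts $Y^{4k}_{j,\eta}$ in a fixed neighbourhood of the reference in the desingularized factor coordinates, use first-order consistency for \eqref{eq:block-bound}, and use continuity of the $h_j^2$ coefficient for \eqref{eq:block-centered}. But Step~2, which you rightly call the crux, is not carried out, and your fallback is false.

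In the desingularized variables the forcing is not small. At order $h^0$, the equation for $\boldsymbol w_K$ contains $[W'(\theta)V]_1\times\boldsymbol k_2+\boldsymbol k_1\times[W'(\theta)V]_2$, and the equation for $\Delta_S$ is $-\operatorname{tr}(\operatorname{adj}(S)U^\top W'(\theta)V)$. The QR transfers depend at order one on $\boldsymbol w_K$ and $\boldsymbol w_L$, and the small factor directions rotate through an angle $\pi$ in every block. A Gronwall estimate in these variables therefore gives a per-block factor $e^{O(1)}$, i.e.\ $e^{Cj}$ over $j$ blocks, not $e^{Cjh_j}$. The factor $e^{4L_Fjh_j}$ is valid only for the full flow in matrix norm. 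The numerical block map is not Lipschitz in matrix norm uniformly in the small singular value, which is the whole point of the example. Nothing in the construction gives contraction or neutrality of the $h=0$ block map at its fixed point either. Finally, the structure of the full flow says nothing about the numerical block starts, which are what must be controlled.

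The missing idea is that no neutrality is needed.
\begin{itemize}
\item The $h=0$ desingularized four-step map, followed by the fixed gauge $G=\operatorname{diag}(1,-1)$, fixes the reference tuple exactly. The gauge is required because $\mathcal B(4)=\mathcal B(0)G$; without it the reference is not a fixed point.
\item The uniform $C^1$ bounds then give $d_{k+1}\le C_0d_k+C_1h_j$ while $d_k\le\rho$, with some $C_0$ that may exceed one.
\item A first-exit argument gives $d_k\le C_1jh_j\max\{1,C_0\}^j$. This tends to zero only because $h_j=2^{-j^2}$ decays faster than every exponential. That, not merely $jh_j\to0$, is why $h_j$ is chosen this way.
\item With $d_k\to0$ uniformly, write $\tau_{k,j,\eta}=h_j^2\mathcal C_\eta+O(h_j^3)$. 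Here $\mathcal C_\eta$ is a Lipschitz function of the full desingularized block-start tuple. This captures the $O(h_j)$ projector corrections, which feed into the $h_j^2$ term, and not only the limiting projectors.
\item Compare each $\tau_{k,j,\eta}$ with $\tau_{0,j,\eta}$ separately. Proposition~\ref{prop:four-step} then finishes the argument.
\end{itemize}

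Drop your claim that $Y^{4k}_{j,+}-Y^{4k}_{j,-}=O(jh_j^2)$ enters only through Lipschitz terms multiplied by $h_j$. A matrix perturbation of size $jh_j^2\gg h_j^3$ can rotate the small singular directions, and hence the projectors, by $O(1)$, so no such bound holds. The separate comparison makes this claim unnecessary.
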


\begin{proof}
We first show that the numerical and full four-step maps agree through
order $h_j$ for every block start near the reference factors.  Uniform
$C^3$ dependence then gives an $h_j^2$ local error whose coefficient is
Lipschitz in the normalized block-start data.  A first-exit argument
keeps all $j$ periodic block starts in that neighborhood.

Fix $0\le k<j$, set
\[
 t=t_{k,j}+h_j\theta,
 \qquad 0\le\theta\le4,
\]
and write the full equation in this variable.  The chain rule,
\eqref{eq:block-periodicity}, and \eqref{eq:periodic-forcing} give
\begin{align}
 \frac{dX}{d\theta}
 &=h_j\dot X(t_{k,j}+h_j\theta)\notag\\
 &=h_jF_{j,\eta}(t_{k,j}+h_j\theta,X)\notag\\
 &=h_j\bigl(M_\eta(X)+R_j(h_j\theta)\bigr)\notag\\
 &=h_jM_\eta(X)+h_j^3W'(\theta).
 \label{eq:actual-block-rescaling}
\end{align}
Substituting the right-hand side of
\eqref{eq:actual-block-rescaling} into the five equations in
\eqref{eq:five-substeps} gives the K, S, and L equations on fixed
$\theta$-intervals.

At the beginning of block $k$, write
\begin{equation}\label{eq:block-factorization}
 Y_{j,\eta}^{4k}=U_{k,j,\eta}S_{k,j,\eta}V_{k,j,\eta}^{\top},
 \qquad
 S_{k,j,\eta}=(s_{iq}^{k,j,\eta})_{i,q=1}^2,
\end{equation}
where $U_{k,j,\eta}$ and $V_{k,j,\eta}$ have orthonormal columns.
At every QR factorization, choose each new column to have positive
inner product with the reference direction
$\boldsymbol\nu(\ell+\tfrac12)$ after $K_1$ and
$\boldsymbol\nu(\ell+1)$ after $L$ and $K_2$.  After each block
apply the fixed gauge
\begin{equation}\label{eq:block-gauge-main}
 (U,S,V)\longmapsto(UG,G^\top SG,VG),
 \qquad G:=\begin{pmatrix}1&0\\0&-1\end{pmatrix}.
\end{equation}
This transformation leaves both $USV^\top$ and all subsequent factor
subproblems unchanged; see Subsection~\ref{app:four-step-gauge}.
Define
\begin{equation}\label{eq:block-core-determinant}
 \Delta_{k,j,\eta}:=h_j^{-3}\det S_{k,j,\eta}
\end{equation}
and measure the departure from the initial factors by
\begin{align}
 d_{k,j,\eta}:={}&
 \norm{U_{k,j,\eta}-[\boldsymbol e_1,\boldsymbol e_2]}_{\F}
 +\norm{V_{k,j,\eta}-[\boldsymbol e_1,\boldsymbol e_2]}_{\F}
 \notag\\
 &+|s_{11}^{k,j,\eta}-1|+|s_{12}^{k,j,\eta}|
 +|s_{21}^{k,j,\eta}|+|\Delta_{k,j,\eta}-1|.
 \label{eq:block-factor-distance}
\end{align}
The initial factorization gives
\begin{equation}\label{eq:block-factor-distance-initial}
 d_{0,j,\eta}=0.
\end{equation}
Lemma~\ref{lem:desingularized-rank-neighborhood} supplies a fixed number
\(0<\rho<1/2\).  Whenever \(d_{k,j,\eta}\le\rho\),
\begin{equation}\label{eq:block-core-chart}
 |s_{11}^{k,j,\eta}|\ge1-\rho>\frac12,
 \qquad
 s_{22}^{k,j,\eta}
 =\frac{s_{12}^{k,j,\eta}s_{21}^{k,j,\eta}
       +h_j^3\Delta_{k,j,\eta}}{s_{11}^{k,j,\eta}}.
\end{equation}

We next estimate the four-step error.  For
\(d_{k,j,\eta}\le\rho\), \eqref{eq:block-core-chart} gives
\begin{align}
 S_{k,j,\eta}
 &=\begin{pmatrix}
 s_{11}^{k,j,\eta}&s_{12}^{k,j,\eta}\\
 s_{21}^{k,j,\eta}&
 \dfrac{s_{12}^{k,j,\eta}s_{21}^{k,j,\eta}
       +h_j^3\Delta_{k,j,\eta}}{s_{11}^{k,j,\eta}}
 \end{pmatrix}\notag\\
 &=\begin{pmatrix}s_{11}^{k,j,\eta}\\s_{21}^{k,j,\eta}\end{pmatrix}
   \begin{pmatrix}1&s_{12}^{k,j,\eta}/s_{11}^{k,j,\eta}\end{pmatrix}
 +h_j^3\begin{pmatrix}
 0&0\\0&\Delta_{k,j,\eta}/s_{11}^{k,j,\eta}
 \end{pmatrix},
 \label{eq:block-core-positive-step-expansion}\\
 \overline Y_{j,\eta}^{\,4k}
 &:=U_{k,j,\eta}
 \begin{pmatrix}s_{11}^{k,j,\eta}\\s_{21}^{k,j,\eta}\end{pmatrix}
 \left(
 V_{k,j,\eta}
 \begin{pmatrix}1\\s_{12}^{k,j,\eta}/s_{11}^{k,j,\eta}\end{pmatrix}
 \right)^\top
 =:\boldsymbol u\boldsymbol v^\top,
 \label{eq:block-leading-matrix}\\
 Y_{j,\eta}^{4k}
 &=\overline Y_{j,\eta}^{\,4k}
 +h_j^3U_{k,j,\eta}
 \begin{pmatrix}
 0&0\\0&\Delta_{k,j,\eta}/s_{11}^{k,j,\eta}
 \end{pmatrix}
 V_{k,j,\eta}^\top.
 \label{eq:block-leading-relation}
\end{align}
Thus \(\overline Y_{j,\eta}^{\,4k}\) is the rank-one limit of
\eqref{eq:block-leading-relation} in the desingularized coordinates.
Consider one of
the four Strang steps in the $h=0$ desingularized factor path supplied by
Lemma~\ref{lem:desingularized-block-map}.  Let
$(U_0,S_0,V_0)$ be its input factors, let $U_1$ be the basis obtained
after $K_1$, and let $V_1$ be the basis obtained after $L$.  As in
\eqref{eq:projectors-one-step}, define
\begin{equation}\label{eq:block-proof-projectors}
 P_0:=V_0V_0^\top,\qquad Q:=U_1U_1^\top,\qquad
 P_1:=V_1V_1^\top.
\end{equation}
The column spaces of $U_0,U_1$ contain $\boldsymbol u$, and the column
spaces of $V_0,V_1$, which represent the corresponding matrix row
spaces, contain $\boldsymbol v$.  Hence
\[
 Q\boldsymbol u=\boldsymbol u,\qquad
 P_0\boldsymbol v=P_1\boldsymbol v=\boldsymbol v.
\]
Using the substep lengths in \eqref{eq:substep-lengths} gives the
first-order sum
\begin{align}
 &\frac12M_\eta(\overline Y_{j,\eta}^{\,4k})P_0
 -\frac12QM_\eta(\overline Y_{j,\eta}^{\,4k})P_0
 +QM_\eta(\overline Y_{j,\eta}^{\,4k})\notag\\
 &\qquad
 -\frac12QM_\eta(\overline Y_{j,\eta}^{\,4k})P_1
 +\frac12M_\eta(\overline Y_{j,\eta}^{\,4k})P_1\notag\\
 &\quad=QM_\eta(\overline Y_{j,\eta}^{\,4k})
 +(I-Q)M_\eta(\overline Y_{j,\eta}^{\,4k})\frac{P_0+P_1}{2}\notag\\
 &\quad=M_\eta(\overline Y_{j,\eta}^{\,4k}).
 \label{eq:block-first-order-identity}
\end{align}
For the last equality, use
$M_\eta(\overline Y_{j,\eta}^{\,4k})
=q_\eta(\overline Y_{j,\eta}^{\,4k})
(\Omega\overline Y_{j,\eta}^{\,4k}
-\overline Y_{j,\eta}^{\,4k}\Omega)$ and
\[
 (I-Q)(-\overline Y_{j,\eta}^{\,4k}\Omega)=0,
 \qquad
 ((I-Q)\Omega\overline Y_{j,\eta}^{\,4k})P_0
 =((I-Q)\Omega\overline Y_{j,\eta}^{\,4k})P_1
 =(I-Q)\Omega\overline Y_{j,\eta}^{\,4k}.
\]
Equations \eqref{eq:block-local-error},
\eqref{eq:actual-block-rescaling}, and
\eqref{eq:block-first-order-identity} give
\begin{align}
 \tau_{k,j,\eta}
 &=Y_{j,\eta}^{4(k+1)}
 -\Phi_{j,\eta}(t_{k+1,j};t_{k,j},Y_{j,\eta}^{4k})\notag\\
 &=\left(
 \overline Y_{j,\eta}^{\,4k}
 +h_j\sum_{\ell=0}^3M_\eta(\overline Y_{j,\eta}^{\,4k})
 +O(h_j^2)\right)
 -\left(
 \overline Y_{j,\eta}^{\,4k}
 +h_j\int_0^4M_\eta(\overline Y_{j,\eta}^{\,4k})\,d\theta
 +O(h_j^2)\right)\notag\\
 &=\left(\overline Y_{j,\eta}^{\,4k}
 +4h_jM_\eta(\overline Y_{j,\eta}^{\,4k})+O(h_j^2)\right)-\left(
 \overline Y_{j,\eta}^{\,4k}
 +4h_jM_\eta(\overline Y_{j,\eta}^{\,4k})+O(h_j^2)\right)\notag\\
 &=O(h_j^2)
 \label{eq:block-error-first-order-cancellation}
\end{align}
whenever \(d_{k,j,\eta}\le\rho\).

At $h=0$, the factor cycle
\eqref{eq:four-step-factor-chain}, followed by the gauge
\eqref{eq:block-gauge-main}, maps the reference tuple
\[
 ([\boldsymbol e_1,\boldsymbol e_2],
  [\boldsymbol e_1,\boldsymbol e_2],1,0,0,1)
\]
to itself.  The uniform $C^1$ bound in
Lemma~\ref{lem:desingularized-block-map} therefore gives
\begin{equation}\label{eq:block-factor-distance-recurrence}
 d_{k+1,j,\eta}
 \le C_0d_{k,j,\eta}+C_1h_j
\end{equation}
whenever $d_{k,j,\eta}\le\rho$.
The uniform $C^3$ bound and
\eqref{eq:block-error-first-order-cancellation} give, on the same
neighborhood,
\begin{align}
 \tau_{k,j,\eta}
 &=h_j^2\mathcal C_\eta
   (U_{k,j,\eta},V_{k,j,\eta},s_{11}^{k,j,\eta},
    s_{12}^{k,j,\eta},s_{21}^{k,j,\eta},\Delta_{k,j,\eta})
   +O(h_j^3),
 \label{eq:block-error-taylor}
\end{align}
where $\mathcal C_\eta$ is one half of the second derivative at $h=0$
of the difference between the four-step numerical endpoint and the
corresponding full-flow endpoint, evaluated at the displayed factor
tuple.  On the fixed neighborhood,
\begin{equation}\label{eq:block-error-taylor-bounds}
 \norm{\mathcal C_\eta}_{\F}
 +\|D\mathcal C_\eta\|\le C,
\end{equation}
and the remainder in \eqref{eq:block-error-taylor} is bounded by
$Ch_j^3$.
Here $D\mathcal C_\eta$ denotes the derivative with respect to the six
argument blocks---the two Stiefel factors and four scalars---using the
product of their Euclidean and Frobenius norms.  Since the arguments for
$k=0$ equal the reference
tuple displayed above,
equations \eqref{eq:block-error-taylor}--
\eqref{eq:block-error-taylor-bounds} imply
\begin{align}
 \norm{\tau_{k,j,\eta}}_{\F}
 &\le Ch_j^2,
 \label{eq:block-error-direct-bound}\\
 \norm{\tau_{k,j,\eta}-\tau_{0,j,\eta}}_{\F}
 &\le Ch_j^2d_{k,j,\eta}+Ch_j^3.
 \label{eq:block-error-start-dependence}
\end{align}

We next show that $d_{k,j,\eta}\le\rho$ throughout the first
$j$ blocks.  If $k_*\le j$ were the first index
with $d_{k_*,j,\eta}>\rho$, then
\eqref{eq:block-factor-distance-recurrence} would be valid up to
$k_*-1$ and, by \eqref{eq:block-factor-distance-initial},
\begin{equation}\label{eq:block-factor-distance-convergence}
 d_{k_*,j,\eta}
 \le C_1h_j(1+C_0+\cdots+C_0^{k_*-1})
 \le C_1jh_j\max\{1,C_0\}^{j}.
\end{equation}
By \eqref{eq:hj}, for every fixed $c\ge1$,
\[
 jh_jc^j=j2^{-j^2}c^j\longrightarrow0.
\]
For all sufficiently large $j$, the last expression is smaller than
$\rho$, a contradiction.  Therefore no first exit exists and
\begin{equation}\label{eq:block-factor-distance-uniform}
 \max_{0\le k\le j}d_{k,j,\eta}
 \le C_1jh_j\max\{1,C_0\}^{j}\longrightarrow0.
\end{equation}
Consequently, Proposition~\ref{prop:four-step} and
\eqref{eq:block-factor-distance-uniform} give, uniformly for
$0\le k<j$,
\begin{align}
 \ip{\tau_{k,j,+}-\tau_{k,j,-}}{E_{12}}
 &={}
 \ip{\tau_{0,j,+}-\tau_{0,j,-}}{E_{12}}
 +O\!\left(h_j^2(d_{k,j,+}+d_{k,j,-})+h_j^3\right)\notag\\
 &=\frac{\sqrt2}{16}h_j^2+o(h_j^2).
 \label{eq:block-error-coefficient-propagation}
\end{align}
Equations \eqref{eq:block-error-direct-bound} and
\eqref{eq:block-error-coefficient-propagation} are
\eqref{eq:block-bound} and \eqref{eq:block-centered}, respectively.
\end{proof}

\begin{proposition}[Rank-admissibility of the entire numerical trajectory]
\label{prop:full-rank-admissibility}
For all sufficiently large $j$, the exact K--S--L--S--K trajectory is
rank-admissible on $[0,1]$.
\end{proposition}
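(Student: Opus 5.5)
The trajectory on $[0,1]$ splits into three regimes: the $j$ periodic blocks on $[0,4jh_j]$, the four-step switch-off interval $[4jh_j,4(j+1)h_j]$, and the remaining steps on $[4(j+1)h_j,1]$, where $R_j\equiv0$. I would handle each regime separately and carry a quantitative closeness estimate from one to the next.

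\textbf{Periodic blocks.} Rank-admissibility here follows from the proof of Proposition~\ref{prop:uniform-block}. By \eqref{eq:block-factor-distance-uniform}, every block start satisfies $d_{k,j,\eta}\le\rho$ for $0\le k\le j$. By Lemma~\ref{lem:desingularized-rank-neighborhood}, for such starts the desingularized rank quantities $h_j^{-3}(\boldsymbol k_1\times\boldsymbol k_2)$, $h_j^{-3}\det S$ and $h_j^{-3}(\boldsymbol\lambda_1\times\boldsymbol\lambda_2)$ stay bounded away from zero along all twenty substeps. This gives \eqref{eq:rank-conditions} on $[0,4jh_j]$.

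\textbf{Switch-off interval.} In the rescaled variable $\theta$, the forcing is $h_j^3\,d W(\vartheta_j)/d\theta$, which has the same structure as \eqref{eq:actual-block-rescaling} but with $W'$ replaced by $4\psi'(x)W'(4\psi(x))$ at $x=\theta/4$. At $h=0$ the desingularized factor cycle is then the same kind of map: the small-singular-value directions are rotated by the phase $4\psi$, which advances by exactly $4$. I would either restate Lemma~\ref{lem:desingularized-block-map} and Lemma~\ref{lem:desingularized-rank-neighborhood} for this modified smooth forcing (their proofs only use a smooth bounded profile), or check directly that the positive rank margins persist, since $\psi'\ge0$. Either route gives admissibility for these four steps from a start with $d_{j,j,\eta}\le\rho$. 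The endpoint has $d\le C(d_{j,j,\eta}+h_j)$, which is small.

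\textbf{Unforced tail; main obstacle.} For $t\ge4(j+1)h_j$ the problem is $\dot X=M_\eta(X)$. This is the main obstacle, because there are about $h_j^{-1}$ steps and the singular value $\approx h_j^3$ is no longer controlled by periodicity. My plan is to use an exact invariance rather than perturbation. The field $M_\eta(X)=q_\eta(X)[\Omega,X]$ is tangent to the orbit $\{e^{s\Omega}Xe^{-s\Omega}\}$, so the full flow is a time-reparametrized conjugation and preserves singular values. For the integrator, each subflow also has this structure. In the K-step, $\dot K=q_\eta\,(\Omega K V_0^\top-KV_0^\top\Omega)V_0$; in the L- and S-steps the projected fields are similar. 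I would show that each exact subflow keeps the retained singular values within a factor $1+O(h)$ per step. Concretely, I would exhibit $\det$-type invariants: $\frac{d}{dt}\det(S)$ and the Gram determinants of $K$ and $L$ are given by traces of projected commutators with $\Omega$. These traces are bounded by $C|\det|$ because $\Omega$ is skew. A Gr\"onwall bound over time $\le1$ then gives a lower bound $c\,h_j^3$ on all three rank quantities for the whole tail, uniformly in $j$. If this trace argument fails for some substep, the fallback is to work in the desingularized variables with the $h=0$ limit map on the compact set $\{d\le\rho\}$ and run a first-exit argument like the one in \eqref{eq:block-factor-distance-convergence}. That version would need a contraction-free bound $d_{n+1}\le(1+Ch_j)d_n+Ch_j^2$, which I expect the skew structure of $\Omega$ to provide.
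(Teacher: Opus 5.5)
Your three regimes, and the tool you use in each, match the paper's proof. You use \eqref{eq:block-factor-distance-uniform} with Lemma~\ref{lem:desingularized-rank-neighborhood} for the $j$ periodic blocks. For the next four steps you use the same lemma, started from $d_{j,j,\eta}\to0$; the paper already states that lemma for the switch-off block. For the unforced tail you use exact structure preservation, which is Lemma~\ref{lem:post-switch-rank}. In the tail your Jacobi-formula traces are in fact identically zero. Every K, S, and L subflow is an orthogonal conjugation, for instance $K(t)=e^{\sigma(t)\Omega}K(t_0)e^{-\sigma(t)V^\top\Omega V}$ with $\sigma(t)$ the time integral of $q_\eta$ along the solution. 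The relevant determinants are therefore exactly invariant, so you need no Gr\"onwall bound, no closeness estimate at the end of the switch-off, and no fallback. That fallback would fail as stated anyway: $M_\eta$ does not vanish near $E_{11}$, so the factors move by order $h_j$ in each of the roughly $h_j^{-1}$ tail steps and rotate through an $O(1)$ angle. Hence no bound $d_{n+1}\le(1+Ch_j)d_n+Ch_j^2$ relative to a fixed reference can hold.

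The one step you justify incorrectly is the switch-off margin. It follows neither from $\psi'\ge0$ nor from the profile merely being smooth and bounded; what matters is the upper bound $\psi'\le\beta$. The rescaled forcing is $h_j^3\psi'(\theta/4)W'(4\psi(\theta/4))$, as in \eqref{eq:switch-off-scaled}; your $4\psi'(x)W'(4\psi(x))$ is the derivative in $x$, not in $\theta$. With this forcing, each half-step rotation $a$ is at most $\pi\beta/8$, a bound that exceeds the periodic value $\pi/8$. Over such a half step, the reference S coefficient in \eqref{eq:reduction-rank-coefficients} has minimum $(3\cos a-1)/2$. It stays positive because $\cos(\pi\beta/8)>1/3$. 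A smooth bounded profile with $\cos a<1/3$ on some half step would instead make $\det S$ vanish in mid-substep at the reference level. Your ``check directly'' route must contain this quantitative check, which is the one carried out in Subsection~\ref{app:switch-off-margins}.
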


\begin{proof}
The rank margins in Lemma~\ref{lem:desingularized-rank-neighborhood}, together
with \eqref{eq:block-factor-distance-uniform}, show that the first $j$
periodic blocks are rank-admissible for all sufficiently large $j$.
The switch-off margins in
Subsection~\ref{app:switch-off-margins}, together with
$d_{j,j,\eta}\to0$ and
Lemma~\ref{lem:desingularized-rank-neighborhood}, give the same
conclusion for the next four steps.  Thereafter $R_j(t)=0$, and
Lemma~\ref{lem:post-switch-rank} shows that every K, S, and L subproblem
preserves its input rank.  Thus the complete trajectory is
rank-admissible on $[0,1]$.
\end{proof}

We now transport each four-step local error through the remaining full
flow and telescope the resulting differences.  Equation
\eqref{eq:block-local-error} gives
\[
 Y_{j,\eta}^{4(k+1)}
 =\Phi_{j,\eta}(t_{k+1,j};t_{k,j},Y_{j,\eta}^{4k})
 +\tau_{k,j,\eta},
\]
while the composition identity for the full flow is
\[
 \Phi_{j,\eta}(t_{j,j};t_{k,j},Y_{j,\eta}^{4k})
 =\Phi_{j,\eta}\left(
 t_{j,j};t_{k+1,j},
 \Phi_{j,\eta}(t_{k+1,j};t_{k,j},Y_{j,\eta}^{4k})\right).
\]
Consequently, writing $D_X\Phi_{j,\eta}(t;s,X)$ for the derivative of
the full flow with respect to its initial matrix $X$,
\begin{align}
 e_{j,\eta}
 &:=Y_{j,\eta}^{4j}-A_{j,\eta}(4jh_j)\notag\\
 &=Y_{j,\eta}^{4j}
   -\Phi_{j,\eta}(t_{j,j};t_{0,j},Y_{j,\eta}^{0})\notag\\
 &=\sum_{k=0}^{j-1}\left[
 \Phi_{j,\eta}(t_{j,j};t_{k+1,j},Y_{j,\eta}^{4(k+1)})
 -\Phi_{j,\eta}(t_{j,j};t_{k,j},Y_{j,\eta}^{4k})\right]\notag\\
 &=\sum_{k=0}^{j-1}\left[
 \Phi_{j,\eta}(t_{j,j};t_{k+1,j},Y_{j,\eta}^{4(k+1)})
 -\Phi_{j,\eta}\left(
 t_{j,j};t_{k+1,j},
 \Phi_{j,\eta}(t_{k+1,j};t_{k,j},Y_{j,\eta}^{4k})\right)\right]
 \notag\\
 &=\sum_{k=0}^{j-1}\underbrace{\left[\int_0^1
 D_X\Phi_{j,\eta}
 \left(t_{j,j};t_{k+1,j},
 \Phi_{j,\eta}(t_{k+1,j};t_{k,j},Y_{j,\eta}^{4k})
 +\lambda\tau_{k,j,\eta}\right)\,d\lambda\right]}_{=:J_{k,j,\eta}}
 \tau_{k,j,\eta}\notag\\
 &=\sum_{k=0}^{j-1}J_{k,j,\eta}\tau_{k,j,\eta}.
 \label{eq:telescope}
\end{align}
The variational equation gives, uniformly in $k$ and $\eta$,
\begin{equation}\label{eq:transport-close}
 \norm{J_{k,j,\eta}-I}_{\F\to\F}
 \le e^{4L_Fjh_j}-1=o(1).
\end{equation}
Subtracting \eqref{eq:telescope} for $\eta=+1$ and $\eta=-1$ gives
\begin{align}
 &\ip{e_{j,+}-e_{j,-}}{E_{12}}\notag\\
 &\quad=\sum_{k=0}^{j-1}
 \ip{\tau_{k,j,+}-\tau_{k,j,-}}{E_{12}}
 +\sum_{k=0}^{j-1}
 \left\langle
 \begin{aligned}
 &(J_{k,j,+}-I)\tau_{k,j,+}\\[-1mm]
 &\quad -(J_{k,j,-}-I)\tau_{k,j,-}
 \end{aligned},E_{12}\right\rangle_{\F}.
 \label{eq:accumulation-decomposition}
\end{align}

\begin{proof}[Proof of Theorem~\ref{thm:main}]
Let $j$ be sufficiently large.  By
Proposition~\ref{prop:full-rank-admissibility}, the numerical trajectory
is rank-admissible on $[0,1]$.  Proposition~\ref{prop:uniform-block}
gives
\begin{equation}\label{eq:sum-block-errors}
 \sum_{k=0}^{j-1}
 \ip{\tau_{k,j,+}-\tau_{k,j,-}}{E_{12}}
 =\frac{\sqrt2}{16}jh_j^2+o(jh_j^2).
\end{equation}
Moreover, \eqref{eq:block-bound} and \eqref{eq:transport-close} give
\begin{align}
 &\left|\sum_{k=0}^{j-1}
 \ip{(J_{k,j,+}-I)\tau_{k,j,+}
 -(J_{k,j,-}-I)\tau_{k,j,-}}{E_{12}}\right|\notag\\
 &\quad\le
 \max_{\substack{0\le k<j\\ \eta\in\{+1,-1\}}}
 \norm{J_{k,j,\eta}-I}_{\F\to\F}
 \sum_{k=0}^{j-1}
 \left(\norm{\tau_{k,j,+}}_{\F}+\norm{\tau_{k,j,-}}_{\F}\right)
 =o(jh_j^2).
 \label{eq:sum-transport-errors}
\end{align}
Substituting \eqref{eq:sum-block-errors} and
\eqref{eq:sum-transport-errors} into
\eqref{eq:accumulation-decomposition} yields
\begin{equation}\label{eq:accumulated}
 \ip{e_{j,+}-e_{j,-}}{E_{12}}
 =\frac{\sqrt2}{16}jh_j^2+o(jh_j^2).
\end{equation}
For all sufficiently large $j$, the absolute value of the left-hand
side is at least $(\sqrt2/32)jh_j^2$.  Since $\norm{E_{12}}_{\F}=1$,
\begin{equation*}
 \frac{\sqrt2}{32}jh_j^2
 \le\bigl|\ip{e_{j,+}-e_{j,-}}{E_{12}}\bigr|
 \le\norm{e_{j,+}}_{\F}+\norm{e_{j,-}}_{\F}.
\end{equation*}
Therefore
\begin{equation}\label{eq:two-problem-lower-bound}
 \max_{\eta\in\{+1,-1\}}\norm{e_{j,\eta}}_{\F}
 =\max_{\eta\in\{+1,-1\}}
 \norm{Y_{j,\eta}^{4j}-A_{j,\eta}(4jh_j)}_{\F}
 \ge\frac{\sqrt2}{64}jh_j^2.
\end{equation}
Finally,
\[
 \delta_j+\eps_j+h_j^2
 =\left(1+\beta\sqrt2\,\frac\pi4\right)h_j^2.
\]
Since $4jh_j\le1$, the left-hand side of
\eqref{eq:target-intro}, maximized over the two problems, is at least
the quantity in \eqref{eq:two-problem-lower-bound}.  Therefore
\begin{equation*}
 \frac{
  \displaystyle\max_{\eta\in\{+1,-1\}}\max_{t_n\le1}
  \norm{Y_{j,\eta}^n-A_{j,\eta}(t_n)}_{\F}}
 {\delta_j+\eps_j+h_j^2}
 \ge
 \frac{\sqrt2}{64(1+\beta\sqrt2\,\pi/4)}j
 \longrightarrow\infty,
\end{equation*}
which proves the theorem.
\end{proof}

\appendix
\section{The desingularized factor cycle and rank preservation}
\label{app:uniformity}

Throughout this appendix, $h\ge0$ is an independent parameter; the
periodic blocks of the family in Section~\ref{sec:construction} are
obtained by setting $h=h_j$.  Recall
$\eta\in\{+1,-1\}$,
$\Omega:=E_{31}-E_{13}$ and
$x_{31}:=\boldsymbol e_3^\top X\boldsymbol e_1$, together with
\[
 q_\eta(X):=\frac{1+\eta\tanh(x_{31})}
 {\sqrt{1+\norm{X}_{\F}^2}},
 \qquad M_\eta(X):=q_\eta(X)(\Omega X-X\Omega).
\]
For $h>0$ on a periodic block, set
\[
 R_h(t):=h^2W'(t/h),\qquad F_{h,\eta}(t,X):=M_\eta(X)+R_h(t).
\]
This appendix proves, from the factor equations for $F_{h,\eta}$, that
\[
 \rank K=\rank L=2,
 \qquad \det S\ne0
\]
throughout every substep.  These are the rank statements used in
Lemma~\ref{lem:rank-projectors} and
Proposition~\ref{prop:full-rank-admissibility}.  Subsection~\ref{app:desingularized-factor-coordinates}
removes the scale $h^3$ from the rank-determining quantities;
Subsections~\ref{app:leading-factor-cycle}--\ref{app:four-step-gauge}
compute the coefficient of $h^0$, its positive margins, and
the four-step gauge; Subsections~\ref{app:uniform-neighborhood}--
\ref{app:uniform-block-expansions} transfer these results to $h>0$.
\begin{equation}\label{eq:factor-definitions}
\begin{aligned}
 \boldsymbol\nu(\theta)&:=
 \begin{pmatrix}0\\ \cos(\pi\theta/4)\\ \sin(\pi\theta/4)\end{pmatrix},
 &W(\theta)&:=\boldsymbol\nu(\theta)\boldsymbol\nu(\theta)^\top,\\
 \mathcal B(\theta)&:=[\boldsymbol e_1,\boldsymbol\nu(\theta)],
 &\Sigma_h(\zeta)&:=\begin{pmatrix}1&0\\0&h^3\zeta\end{pmatrix},
 &\gamma&:=\cos(\pi/8).
\end{aligned}
\end{equation}
Every $\mathcal B(\theta)\in\mathbb R^{3\times2}$ has orthonormal columns.  The
identities
\begin{equation}\label{eq:nu-inner-product}
\begin{aligned}
 W(\theta_1)\boldsymbol e_1&=0,\\
 \boldsymbol\nu(\theta_1)^\top\boldsymbol\nu(\theta_2)
 &=\cos\!\left(\frac{\pi(\theta_1-\theta_2)}4\right),\\
 W(\theta_1)\boldsymbol\nu(\theta_2)
 &=\cos\!\left(\frac{\pi(\theta_1-\theta_2)}4\right)
 \boldsymbol\nu(\theta_1).
\end{aligned}
\end{equation}
will be used in each substep.  The
initial factorization is
\begin{equation}\label{eq:factor-start}
 (U,S,V)=\left(
 [\boldsymbol e_1,\boldsymbol e_2],
 \begin{pmatrix}1&0\\0&h^3\end{pmatrix},
 [\boldsymbol e_1,\boldsymbol e_2]\right)
 =(\mathcal B(0),\Sigma_h(1),\mathcal B(0)).
\end{equation}

\subsection{Desingularized factor coordinates}
\label{app:desingularized-factor-coordinates}

The second retained singular value has size \(h^3\).  Accordingly,
the rank indicators in the three factor subproblems have the scale
\[
 \boldsymbol k_1\times\boldsymbol k_2=O(h^3),
 \qquad
 \det S=O(h^3),
 \qquad
 \boldsymbol\lambda_1\times\boldsymbol\lambda_2=O(h^3).
\]
Direct differentiation of the corresponding QR maps can therefore
introduce \(h^{-3}\).  We instead use
\[
 h^{-3}(\boldsymbol k_1\times\boldsymbol k_2),
 \qquad
 h^{-3}\det S,
 \qquad
 h^{-3}(\boldsymbol\lambda_1\times\boldsymbol\lambda_2),
\]
and retain the additional scalar components needed to reconstruct the
full factors.  These variables desingularize the original K, S, and L
equations.  After the cancellations displayed below, the resulting
coordinate equations and reconstruction formulas contain no negative
power of \(h\); this is the uniformity used in
Subsections~\ref{app:uniform-neighborhood} and
\ref{app:uniform-block-expansions}.

Fix $\ell\in\{0,1,2,3\}$, set $t=h\theta$ for
$\ell\le\theta\le\ell+1$, and write
$K=[\boldsymbol k_1,\boldsymbol k_2]$,
$L=[\boldsymbol\lambda_1,\boldsymbol\lambda_2]$, and
$S=(s_{iq})_{i,q=1}^2$.  For a matrix $Z$ with two columns,
$[Z]_i$ denotes its $i$th column.  By definition,
$R_h(h\theta)=h^2W'(\theta)$.
The K equation associated with \eqref{eq:five-substeps} therefore gives
\begin{equation}\label{eq:appendix-K-equation}
 \frac{dK}{d\theta}
 =hF_{h,\eta}(h\theta,KV^\top)V
 =hM_\eta(KV^\top)V+h^3W'(\theta)V.
\end{equation}
Since $V^\top V=I$,
\begin{equation}\label{eq:appendix-K-columns}
 M_\eta(KV^\top)V
 =q_\eta(KV^\top)\bigl(\Omega K-K(V^\top\Omega V)\bigr),
\end{equation}
and hence, for $i=1,2$,
\begin{equation}\label{eq:appendix-K-column-equation}
 \frac{d\boldsymbol k_i}{d\theta}
 =hq_\eta(KV^\top)
  \bigl(\Omega\boldsymbol k_i-[K(V^\top\Omega V)]_i\bigr)
  +h^3[W'(\theta)V]_i.
\end{equation}
Define the scaled column area
\begin{equation}\label{eq:appendix-K-area-definition}
 \boldsymbol w_K:=h^{-3}(\boldsymbol k_1\times\boldsymbol k_2).
\end{equation}
For $A\in\mathbb R^{3\times3}$ and
$\boldsymbol a,\boldsymbol b\in\mathbb R^3$,
\begin{equation}\label{eq:cross-product-linear-identity}
 (A\boldsymbol a)\times\boldsymbol b
 +\boldsymbol a\times(A\boldsymbol b)
 =\bigl(\operatorname{tr}(A)I-A^\top\bigr)
  (\boldsymbol a\times\boldsymbol b).
\end{equation}
Thus, for the skew-symmetric matrix $\Omega$, the right-hand side is
$\Omega(\boldsymbol a\times\boldsymbol b)$.  For a two-by-two matrix
$C$, one likewise has
$[KC]_1\times\boldsymbol k_2+\boldsymbol k_1\times[KC]_2
=\operatorname{tr}(C)(\boldsymbol k_1\times\boldsymbol k_2)$.
Differentiating this definition and substituting
\eqref{eq:appendix-K-column-equation} gives
\begin{equation}\label{eq:appendix-normalized-K}
\begin{aligned}
 \frac{d\boldsymbol w_K}{d\theta}
={}&h^{-3}\left(
 \frac{d\boldsymbol k_1}{d\theta}\times\boldsymbol k_2
 +\boldsymbol k_1\times\frac{d\boldsymbol k_2}{d\theta}
 \right)\\
={}&h^{-2}q_\eta(KV^\top)\Bigl(
 \Omega(\boldsymbol k_1\times\boldsymbol k_2)
 -\operatorname{tr}(V^\top\Omega V)
  (\boldsymbol k_1\times\boldsymbol k_2)\Bigr)\\
 &+[W'(\theta)V]_1\times\boldsymbol k_2
 +\boldsymbol k_1\times[W'(\theta)V]_2\\
={}&hq_\eta(KV^\top)\Omega\boldsymbol w_K
 +[W'(\theta)V]_1\times\boldsymbol k_2
 +\boldsymbol k_1\times[W'(\theta)V]_2.
\end{aligned}
\end{equation}
The second equality uses
\eqref{eq:cross-product-linear-identity} and
$(V^\top\Omega V)^\top=-V^\top\Omega V$, hence
$\operatorname{tr}(V^\top\Omega V)=0$.
Together with \(\boldsymbol k_1\), the scaled area does not yet
determine \(\boldsymbol k_2\); its component parallel to
\(\boldsymbol k_1\) is also needed.  Define
\begin{equation}\label{eq:appendix-K-inner-product}
 p_K:=\boldsymbol k_1^\top\boldsymbol k_2.
\end{equation}
Differentiating \(p_K\), using \(\Omega^\top=-\Omega\) and
\((V^\top\Omega V)^\top=-V^\top\Omega V\), gives
\begin{align}
 \frac{dp_K}{d\theta}
 ={}&hq_\eta(KV^\top)
 \bigl((V^\top\Omega V)K^\top K
       -K^\top K(V^\top\Omega V)\bigr)_{12}\notag\\
 &+h^3\left(
 [W'(\theta)V]_1^\top\boldsymbol k_2
 +\boldsymbol k_1^\top[W'(\theta)V]_2\right).
 \label{eq:appendix-normalized-K-parallel}
\end{align}

During an S substep, $U$ and $V$ are fixed.  Using
\(t=h\theta\) in the S equation in
\eqref{eq:five-substeps} gives
\begin{align}
 \frac{dS}{d\theta}
 ={}&-hq_\eta(USV^\top)
 \bigl((U^\top\Omega U)S-S(V^\top\Omega V)\bigr)
 -h^3U^\top W'(\theta)V.
 \label{eq:appendix-S-equation}
\end{align}
Define the scaled core determinant
\begin{equation}\label{eq:appendix-core-determinant-definition}
 \Delta_S:=h^{-3}\det S.
\end{equation}
Jacobi's formula and \eqref{eq:appendix-S-equation} give
\begin{align}
 \frac{d\,\Delta_S}{d\theta}
 &={}h^{-3}\operatorname{tr}\left(
 \operatorname{adj}(S)\frac{dS}{d\theta}\right)\notag\\
 &={}-h^{-2}q_\eta(USV^\top)\det(S)
 \left(\operatorname{tr}(U^\top\Omega U)
 -\operatorname{tr}(V^\top\Omega V)\right)\notag\\
 &\quad-\operatorname{tr}\bigl(
 \operatorname{adj}(S)U^\top W'(\theta)V\bigr)\notag\\
 &={}-\operatorname{tr}\bigl(
 \operatorname{adj}(S)U^\top W'(\theta)V\bigr),
 \label{eq:appendix-normalized-S}
\end{align}
where the last equality uses the skew-symmetry of
$U^\top\Omega U$ and $V^\top\Omega V$.
Together with \(s_{11},s_{12},s_{21}\), it reconstructs the complete
core.  These three entries satisfy
\begin{equation}\label{eq:appendix-core-coordinate-odes}
 \frac{ds_{11}}{d\theta}=\left[\frac{dS}{d\theta}\right]_{11},
 \qquad
 \frac{ds_{12}}{d\theta}=\left[\frac{dS}{d\theta}\right]_{12},
 \qquad
 \frac{ds_{21}}{d\theta}=\left[\frac{dS}{d\theta}\right]_{21},
\end{equation}
where \(dS/d\theta\) is the right-hand side of
\eqref{eq:appendix-S-equation}; the equation for \(\Delta_S\) is
\eqref{eq:appendix-normalized-S}.

For the L substep, $X=UL^\top$ and
\begin{equation}\label{eq:appendix-L-equation}
 \frac{dL}{d\theta}
 =hM_\eta(UL^\top)^\top U+h^3W'(\theta)U.
\end{equation}
Define
\begin{equation}\label{eq:appendix-L-area-definition}
 \boldsymbol w_L:=h^{-3}(\boldsymbol\lambda_1\times\boldsymbol\lambda_2),
 \qquad p_L:=\boldsymbol\lambda_1^\top\boldsymbol\lambda_2.
\end{equation}
Applying the calculation leading to
\eqref{eq:appendix-normalized-K}--
\eqref{eq:appendix-normalized-K-parallel} with
$(K,V,\boldsymbol k_i,p_K,\boldsymbol w_K)$ replaced by
$(L,U,\boldsymbol\lambda_i,p_L,\boldsymbol w_L)$ gives
\begin{align}
 \frac{d\boldsymbol w_L}{d\theta}
 ={}&hq_\eta(UL^\top)\Omega
 \boldsymbol w_L
 \notag\\
 &+[W'(\theta)U]_1\times\boldsymbol\lambda_2
 +\boldsymbol\lambda_1\times[W'(\theta)U]_2.
 \label{eq:appendix-normalized-L}
\end{align}
\begin{align}
 \frac{dp_L}{d\theta}
 ={}&hq_\eta(UL^\top)
 \bigl((U^\top\Omega U)L^\top L
       -L^\top L(U^\top\Omega U)\bigr)_{12}\notag\\
 &+h^3\left(
 [W'(\theta)U]_1^\top\boldsymbol\lambda_2
 +\boldsymbol\lambda_1^\top[W'(\theta)U]_2\right).
 \label{eq:appendix-normalized-L-parallel}
\end{align}

Equations \eqref{eq:appendix-K-column-equation},
\eqref{eq:appendix-normalized-K},
\eqref{eq:appendix-normalized-K-parallel},
\eqref{eq:appendix-S-equation},
\eqref{eq:appendix-normalized-S},
\eqref{eq:appendix-L-equation},
\eqref{eq:appendix-normalized-L}, and
\eqref{eq:appendix-normalized-L-parallel} contain no negative power of
$h$ after the second columns and the lower-right core entry are
reconstructed by
\begin{align}
 \boldsymbol k_2
 &=\frac{p_K}{\norm{\boldsymbol k_1}_2^2}\boldsymbol k_1
 +h^3\frac{\boldsymbol w_K\times\boldsymbol k_1}
 {\norm{\boldsymbol k_1}_2^2},
 \label{eq:appendix-QR-reconstruction}\\
 s_{22}&=\frac{s_{12}s_{21}+h^3\Delta_S}{s_{11}}.
 \label{eq:appendix-core-reconstruction}
\end{align}
The L reconstruction is the same formula with
\(\boldsymbol k_i\) replaced by \(\boldsymbol\lambda_i\).

For later use, the signed QR map itself is
\begin{align}
 \boldsymbol u_1&:=\frac{\boldsymbol k_1}{\norm{\boldsymbol k_1}_2},
 &\boldsymbol u_2&:=
 \frac{\boldsymbol w_K\times\boldsymbol k_1}
      {\norm{\boldsymbol w_K\times\boldsymbol k_1}_2},
 \label{eq:appendix-signed-QR-basis}\\
 U_1&:=[\boldsymbol u_1,\boldsymbol u_2],
 &\widehat S_1&:=
 \begin{pmatrix}
 \norm{\boldsymbol k_1}_2&p_K/\norm{\boldsymbol k_1}_2\\
 0&h^3
 \dfrac{\norm{\boldsymbol w_K\times\boldsymbol k_1}_2}
 {\norm{\boldsymbol k_1}_2^2}
 \end{pmatrix}.
 \label{eq:appendix-signed-QR-core}
\end{align}
Then \(K=U_1\widehat S_1\).  The scaled determinant of the
transferred core is
\begin{equation}\label{eq:appendix-signed-QR-determinant}
 h^{-3}\det\widehat S_1
 =\frac{\norm{\boldsymbol w_K\times\boldsymbol k_1}_2}
 {\norm{\boldsymbol k_1}_2}.
\end{equation}
The L-interface uses the identical formulas with
\(\boldsymbol k_i,U_1,\widehat S_1\) replaced by
\(\boldsymbol\lambda_i,V_1,\widehat S_2^\top\).  The remaining transfers
in \eqref{eq:factor-transfers} are identities or transposes of these
two-by-two cores.

\subsection{The leading factor cycle}
\label{app:leading-factor-cycle}

Setting $h=0$ in \eqref{eq:appendix-normalized-K},
\eqref{eq:appendix-normalized-S}, and
\eqref{eq:appendix-normalized-L} removes every $M_\eta$ term.  Hence the
$h^0$ factor equations, written in the physical time $t$, are
\begin{align*}
 \dot K_1&=R_hV_0,
 &\dot S_1&=-U_1^\top R_hV_0,
 &\dot L&=R_h^\top U_1,\\
 \dot S_2&=-U_1^\top R_hV_1,
 &\dot K_2&=R_hV_1.
\end{align*}
We integrate these equations in the displayed order.

For the step on $[\ell h,(\ell+1)h]$, where
$\ell\in\{0,1,2,3\}$, and
$\ell\le\theta_0<\theta_1\le\ell+1$,
\begin{equation}\label{eq:reference-R-integral}
 \int_{\theta_0h}^{\theta_1h}R_h(t)\,dt
 =h^2\int_{\theta_0h}^{\theta_1h}W'(t/h)\,dt
 =h^3\int_{\theta_0}^{\theta_1}W'(\theta)\,d\theta
 =h^3[W(\theta_1)-W(\theta_0)].
\end{equation}
We now apply \eqref{eq:reference-R-integral} successively to the five
factor equations in \eqref{eq:five-substeps}, using the transfers in
\eqref{eq:factor-transfers} and starting from
$(U_0,S_0,V_0)=(\mathcal B(\ell),\Sigma_h(1),\mathcal B(\ell))$.

For the first K substep,
\[
 \dot K_1(t)=R_h(t)\mathcal B(\ell),
 \qquad K_1(\ell h)=\mathcal B(\ell)\Sigma_h(1).
\]
Hence
\begin{align*}
 K_1((\ell+\tfrac12)h)
 &=\mathcal B(\ell)\Sigma_h(1)+h^3
 [W(\ell+\tfrac12)-W(\ell)]\mathcal B(\ell)\\
 &=\left[\boldsymbol e_1,
 h^3\left(\boldsymbol\nu(\ell)
 +[W(\ell+\tfrac12)-W(\ell)]\boldsymbol\nu(\ell)\right)\right]\\
 &=\mathcal B(\ell+\tfrac12)\Sigma_h(\gamma),
\end{align*}
because
\[
 \boldsymbol\nu(\ell)+
 [W(\ell+\tfrac12)-W(\ell)]\boldsymbol\nu(\ell)
 =\gamma\boldsymbol\nu(\ell+\tfrac12).
\]
Its QR factorization therefore gives
\[
 U_1=\mathcal B(\ell+\tfrac12),
 \qquad \widehat S_1=\Sigma_h(\gamma).
\]

The first S substep satisfies
\[
 \dot S_1(t)
 =-\mathcal B(\ell+\tfrac12)^\top R_h(t)\mathcal B(\ell),
 \qquad S_1(\ell h)=\Sigma_h(\gamma).
\]
Since $W(\theta)\boldsymbol e_1=0$ and
\[
 \boldsymbol\nu(\ell+\tfrac12)^\top
 [W(\ell+\tfrac12)-W(\ell)]\boldsymbol\nu(\ell)
 =\gamma-\gamma=0,
\]
equation \eqref{eq:reference-R-integral} gives
\begin{align*}
 S_1((\ell+\tfrac12)h)
 &=\Sigma_h(\gamma)-h^3\mathcal B(\ell+\tfrac12)^\top
 [W(\ell+\tfrac12)-W(\ell)]\mathcal B(\ell)\\
 &=\Sigma_h(\gamma).
\end{align*}

The L substep therefore starts from
\[
 L(\ell h)=\mathcal B(\ell)\Sigma_h(\gamma),
 \qquad \dot L(t)=R_h(t)^\top \mathcal B(\ell+\tfrac12).
\]
Since $R_h(t)^\top=R_h(t)$,
\begin{align*}
 L((\ell+1)h)
 &=\mathcal B(\ell)\Sigma_h(\gamma)+h^3
 [W(\ell+1)-W(\ell)]\mathcal B(\ell+\tfrac12)\\
 &=\left[\boldsymbol e_1,
 h^3\left(\gamma\boldsymbol\nu(\ell)
 +[W(\ell+1)-W(\ell)]\boldsymbol\nu(\ell+\tfrac12)\right)\right]\\
 &=\mathcal B(\ell+1)\Sigma_h(\gamma),
\end{align*}
where
\[
 \gamma\boldsymbol\nu(\ell)+
 [W(\ell+1)-W(\ell)]\boldsymbol\nu(\ell+\tfrac12)
 =\gamma\boldsymbol\nu(\ell+1).
\]
The QR factorization of $L((\ell+1)h)$ gives
\[
 V_1=\mathcal B(\ell+1),
 \qquad \widehat S_2=\Sigma_h(\gamma).
\]

The second S substep satisfies
\[
 \dot S_2(t)
 =-\mathcal B(\ell+\tfrac12)^\top R_h(t)\mathcal B(\ell+1),
 \qquad S_2((\ell+\tfrac12)h)=\Sigma_h(\gamma).
\]
Since
\[
 \boldsymbol\nu(\ell+\tfrac12)^\top
 [W(\ell+1)-W(\ell+\tfrac12)]\boldsymbol\nu(\ell+1)
 =\gamma-\gamma=0,
\]
we obtain
\begin{align*}
 S_2((\ell+1)h)
 &=\Sigma_h(\gamma)-h^3\mathcal B(\ell+\tfrac12)^\top
 [W(\ell+1)-W(\ell+\tfrac12)]\mathcal B(\ell+1)\\
 &=\Sigma_h(\gamma).
\end{align*}

Finally,
\[
 K_2((\ell+\tfrac12)h)=\mathcal B(\ell+\tfrac12)\Sigma_h(\gamma),
 \qquad \dot K_2(t)=R_h(t)\mathcal B(\ell+1),
\]
and therefore
\begin{align*}
 K_2((\ell+1)h)
 &=\mathcal B(\ell+\tfrac12)\Sigma_h(\gamma)+h^3
 [W(\ell+1)-W(\ell+\tfrac12)]\mathcal B(\ell+1)\\
 &=\left[\boldsymbol e_1,
 h^3\left(\gamma\boldsymbol\nu(\ell+\tfrac12)
 +[W(\ell+1)-W(\ell+\tfrac12)]\boldsymbol\nu(\ell+1)\right)\right]\\
 &=\mathcal B(\ell+1)\Sigma_h(1),
\end{align*}
because
\[
 \gamma\boldsymbol\nu(\ell+\tfrac12)+
 [W(\ell+1)-W(\ell+\tfrac12)]\boldsymbol\nu(\ell+1)
 =\boldsymbol\nu(\ell+1).
\]
The final QR factorization gives
\[
 U_2=\mathcal B(\ell+1),
 \qquad S^+=\Sigma_h(1).
\]
Thus the five derived transfers are
\begin{equation}\label{eq:one-step-factor-walk}
\begin{aligned}
 (\mathcal B(\ell),\Sigma_h(1),\mathcal B(\ell))
 &\xrightarrow{K_1}
 (\mathcal B(\ell+\tfrac12),\Sigma_h(\gamma),\mathcal B(\ell))\\
 &\xrightarrow{S_1}
 (\mathcal B(\ell+\tfrac12),\Sigma_h(\gamma),\mathcal B(\ell))\\
 &\xrightarrow{L}
 (\mathcal B(\ell+\tfrac12),\Sigma_h(\gamma),\mathcal B(\ell+1))\\
 &\xrightarrow{S_2}
 (\mathcal B(\ell+\tfrac12),\Sigma_h(\gamma),\mathcal B(\ell+1))\\
 &\xrightarrow{K_2}
 (\mathcal B(\ell+1),\Sigma_h(1),\mathcal B(\ell+1)).
\end{aligned}
\end{equation}

Substituting $\ell=0,1,2,3$ in
\eqref{eq:one-step-factor-walk} gives all twenty substeps and the four
step endpoints at order $h^0$,
\begin{equation}\label{eq:four-step-factor-chain}
\begin{aligned}
 (\mathcal B(0),\Sigma_h(1),\mathcal B(0))&\longrightarrow(\mathcal B(1),\Sigma_h(1),\mathcal B(1))
 \longrightarrow(\mathcal B(2),\Sigma_h(1),\mathcal B(2))\\
 &\longrightarrow(\mathcal B(3),\Sigma_h(1),\mathcal B(3))
 \longrightarrow(\mathcal B(4),\Sigma_h(1),\mathcal B(4)).
\end{aligned}
\end{equation}
By \eqref{eq:factor-definitions}, every $\mathcal B(\theta)$ has orthonormal columns.
Since $\gamma>0$, every displayed K and L endpoint has column rank two,
and $\det \Sigma_h(\gamma)=h^3\gamma\ne0$ and
$\det \Sigma_h(1)=h^3\ne0$.

\subsection{Positive rank margins}
\label{app:positive-rank-margins}

For $h>0$, the three rank conditions are equivalent to
\begin{equation}\label{eq:appendix-rank-equivalences}
\begin{aligned}
 \operatorname{rank}K=2
 &\iff \boldsymbol k_1\times\boldsymbol k_2\ne0
 \iff \boldsymbol w_K\ne0,\\
 \det S\ne0&\iff \Delta_S\ne0,\\
 \operatorname{rank}L=2
 &\iff \boldsymbol\lambda_1\times\boldsymbol\lambda_2\ne0
 \iff \boldsymbol w_L\ne0.
\end{aligned}
\end{equation}
Along the five equations with $M_\eta$ omitted,
\[
 \norm{\boldsymbol k_1}_2=1,
 \qquad \norm{\boldsymbol\lambda_1}_2=1,
 \qquad |s_{11}|=1.
\]
In the third column below, each symbol denotes its coefficient of
$h^0$ in the corresponding desingularized equation.  Direct
integration gives the following exact values:
{\small
\setlength{\arraycolsep}{3pt}
\begin{equation}\label{eq:appendix-rank-table}
\begin{array}{c|c|c}
\text{substep and quantity}&\text{interval}&\text{coefficient of $h^0$}\\
\hline
K_1:\ \boldsymbol w_K&\ell\le\theta\le\ell+\tfrac12&
 \cos\!\left(\frac\pi4(\theta-\ell)\right)
  \boldsymbol e_1\times\boldsymbol\nu(\theta)\\[2mm]
S_1:\ \Delta_S&\ell\le\theta\le\ell+\tfrac12&
 2\gamma-\cos\!\left(\frac\pi4(\theta-\ell)\right)
 \cos\!\left(\frac\pi8-\frac\pi4(\theta-\ell)\right)\\[2mm]
L:\ \boldsymbol w_L&\ell\le\theta\le\ell+1&
 \cos\!\left(\frac\pi4(\theta-\ell)-\frac\pi8\right)
  \boldsymbol e_1\times\boldsymbol\nu(\theta)\\[2mm]
S_2:\ \Delta_S&\ell+\tfrac12\le\theta\le\ell+1&
 2\gamma-\cos\!\left(\frac\pi4(\theta-\ell-\tfrac12)\right)
 \cos\!\left(\frac\pi8-\frac\pi4(\theta-\ell-\tfrac12)\right)\\[2mm]
K_2:\ \boldsymbol w_K&\ell+\tfrac12\le\theta\le\ell+1&
 \cos\!\left(\frac\pi4(\ell+1-\theta)\right)
  \boldsymbol e_1\times\boldsymbol\nu(\theta).
\end{array}
\end{equation}
}
Since
\[
 \norm{\boldsymbol e_1\times\boldsymbol\nu(\theta)}_2=1,
\]
the K and L rows of \eqref{eq:appendix-rank-table} satisfy
\[
 \norm{\boldsymbol w_K(\theta)}_2
 \ge\gamma,
 \qquad
 \norm{\boldsymbol w_L(\theta)}_2
 \ge\gamma.
\]
For $0\le y\le\pi/8$,
\[
 \cos y\cos(\pi/8-y)
 =\frac{\gamma+\cos(2y-\pi/8)}2
 \le\frac{\gamma+1}{2}.
\]
Hence the two S rows satisfy
\[
 \Delta_S(\theta)
 \ge 2\gamma-\frac{\gamma+1}{2}
 =\frac{3\gamma-1}{2}>0.
\]
These bounds give fixed positive margins for all three conditions in
\eqref{eq:appendix-rank-equivalences}.

\subsection{Four-step gauge normalization}
\label{app:four-step-gauge}

At the end of \eqref{eq:four-step-factor-chain},
\begin{equation}\label{eq:appendix-gauge-data}
 \boldsymbol\nu(4)=-\boldsymbol\nu(0),
 \qquad \mathcal B(4)=\mathcal B(0)G,
 \qquad G:=\begin{pmatrix}1&0\\0&-1\end{pmatrix},
 \qquad G^\top \Sigma_h(1)G=\Sigma_h(1).
\end{equation}
Consequently,
\begin{equation}\label{eq:appendix-gauge-cycle}
 (\mathcal B(4),\Sigma_h(1),\mathcal B(4))
 \longmapsto
 (\mathcal B(0),\Sigma_h(1),\mathcal B(0))
\end{equation}
under the fixed transformation
\begin{equation}\label{eq:appendix-gauge-map}
 (U,S,V)\longmapsto
 (\widetilde U,\widetilde S,\widetilde V)
 :=(UG,G^\top SG,VG).
\end{equation}
The represented matrix is unchanged:
\[
 \widetilde U\widetilde S\widetilde V^\top
 =UG(G^\top SG)G^\top V^\top=USV^\top.
\]
At a K interface, \(\widetilde K=KG\), and
\[
 F_{h,\eta}(t,\widetilde K\widetilde V^\top)\widetilde V
 =F_{h,\eta}(t,KV^\top)VG
 =\dot K G=\dot{\widetilde K}.
\]
The S and L interfaces satisfy
\[
 \dot{\widetilde S}=G^\top\dot S G,
 \qquad
 \widetilde L=LG,
 \qquad
 \dot{\widetilde L}=\dot L G.
\]
Thus \eqref{eq:appendix-gauge-map} does not change any subsequent
factor subproblem.

\subsection{Reference margins during switch-off}
\label{app:switch-off-margins}

Recall
\(
 \psi(x)=x+15x^4-39x^5+34x^6-10x^7
\)
and \(\beta=\max_{0\le x\le1}\psi'(x)\).  In the scaled variable
\(0\le\theta\le4\), the time-dependent term in the factor equations is
\begin{equation}\label{eq:switch-off-scaled}
 h^3\psi'(\theta/4)W'\bigl(4\psi(\theta/4)\bigr).
\end{equation}
For \(\ell\in\{0,1,2,3\}\), define the angular advances during the
first and second half of step \(\ell\) by
\begin{align}
 a_\ell&:=\pi\left[
 \psi\left(\frac{\ell+1/2}{4}\right)
 -\psi\left(\frac{\ell}{4}\right)\right],\notag\\
 b_\ell&:=\pi\left[
 \psi\left(\frac{\ell+1}{4}\right)
 -\psi\left(\frac{\ell+1/2}{4}\right)\right].
 \label{eq:switch-off-half-step-angles}
\end{align}
Since \(0\le\psi'\le\beta\),
\begin{equation}\label{eq:reduction-angular-increments}
 0\le a_\ell,b_\ell\le\frac{\pi\beta}{8}.
\end{equation}
Direct integration of the $h^0$ equations in
Subsection~\ref{app:leading-factor-cycle} gives the following rank
coefficients, where \(\omega\) is the angular advance within the
indicated substep:
\begin{equation}\label{eq:reduction-rank-coefficients}
\begin{array}{c|c|c}
\text{substep}&\text{range}&\text{rank coefficient}\\
\hline
K_1&0\le\omega\le a_\ell&\cos\omega\\
S_1&0\le\omega\le a_\ell&2\cos a_\ell-\cos\omega\cos(a_\ell-\omega)\\
L&0\le\omega\le a_\ell+b_\ell&\cos(\omega-a_\ell)\\
S_2&0\le\omega\le b_\ell&2\cos b_\ell-\cos\omega\cos(b_\ell-\omega)\\
K_2&0\le\omega\le b_\ell&\cos(b_\ell-\omega).
\end{array}
\end{equation}
The K and L coefficients are bounded below by
\(\cos(\pi\beta/8)>0\), and the two S coefficients satisfy
\begin{equation}\label{eq:reduction-rank-margin}
 2\cos(\pi\beta/8)-\frac{1+\cos(\pi\beta/8)}2
 =\frac{3\cos(\pi\beta/8)-1}{2}>0.
\end{equation}

\subsection{Rank preservation after switch-off}
\label{app:post-switch-rank}

\begin{lemma}\label{lem:post-switch-rank}
When the right-hand side is $M_\eta$, every exact K and L subproblem
preserves column rank and every exact S subproblem preserves
nonsingularity.
\end{lemma}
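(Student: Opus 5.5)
The argument rests on the key structural fact that $M_\eta(X)=q_\eta(X)(\Omega X-X\Omega)$ is a scalar multiple of a commutator with the skew matrix $\Omega$. I will treat the three kinds of subproblems separately. In each case I write the factor equation, observe that the $M_\eta$ term generates a linear, trace-free action on the relevant rank indicator, and conclude by Gr\"onwall/Liouville that the indicator cannot reach zero in finite time.

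\emph{K subproblem.} With $V$ fixed and column-orthonormal, \eqref{eq:appendix-K-columns} gives
\[
 \dot K=q_\eta(KV^\top)\bigl(\Omega K-K(V^\top\Omega V)\bigr).
\]
The calculation leading to \eqref{eq:appendix-normalized-K}, with $R_h\equiv0$ and without the $h^{-3}$ scaling, gives for $\boldsymbol c:=\boldsymbol k_1\times\boldsymbol k_2$:
\[
 \dot{\boldsymbol c}=q_\eta(KV^\top)\bigl(\Omega\boldsymbol c-\operatorname{tr}(V^\top\Omega V)\boldsymbol c\bigr)=q_\eta(KV^\top)\,\Omega\boldsymbol c.
\]
Here I use \eqref{eq:cross-product-linear-identity} with $\operatorname{tr}\Omega=0$, $\Omega^\top=-\Omega$, and the vanishing trace of the skew matrix $V^\top\Omega V$. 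This is a linear ODE for $\boldsymbol c$ whose coefficient $q_\eta(KV^\top)\Omega$ is continuous along the (globally existing, since $F$ is bounded and Lipschitz) solution. Its solution is $\boldsymbol c(t)=\mathcal U(t,t_0)\boldsymbol c(t_0)$ with an invertible propagator; in fact, since $\Omega$ is skew, $\norm{\boldsymbol c}_2$ is conserved. Hence $\boldsymbol c(t_0)\ne0$ implies $\boldsymbol c(t)\ne0$, so $\rank K=2$ is preserved.

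\emph{L subproblem.} The same argument applies verbatim, with $(K,V)$ replaced by $(L,U)$ as in \eqref{eq:appendix-normalized-L}. I need only check that $M_\eta(UL^\top)^\top U=q_\eta\,(-\Omega L+L\,U^\top\Omega U)$, using $\Omega^\top=-\Omega$. This again has the form $A\boldsymbol\lambda_i+[LC]_i$ with $A$ skew and $C$ skew, so $\boldsymbol\lambda_1\times\boldsymbol\lambda_2$ satisfies $\dot{\boldsymbol c}=-q_\eta\Omega\boldsymbol c$ and has constant norm.

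\emph{S subproblem.} Jacobi's formula, applied as in \eqref{eq:appendix-normalized-S} with $R_h\equiv0$, gives
\[
 \tfrac{d}{dt}\det S=-q_\eta(USV^\top)\det S\,\bigl(\operatorname{tr}(U^\top\Omega U)-\operatorname{tr}(V^\top\Omega V)\bigr)=0.
\]
So $\det S$ is constant, and nonsingularity is preserved.

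The only delicate point I expect is justifying these identities for the general rank-two case rather than along the reference path. Specifically, I must confirm that the trace identity $[KC]_1\times\boldsymbol k_2+\boldsymbol k_1\times[KC]_2=\operatorname{tr}(C)(\boldsymbol k_1\times\boldsymbol k_2)$ holds for any $2\times2$ matrix $C$, which is a direct expansion. I must also confirm that the scalar factor $q_\eta$ merely multiplies the generator and never enters the trace. Global existence of each exact subflow on its time interval follows from the uniform Lipschitz bound \eqref{eq:theorem-basic-bounds}, since the K and L right-hand sides are compositions of $F$ with fixed isometries.
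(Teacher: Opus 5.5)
Your proof is correct, but it takes a different route from the paper. The paper solves each subflow in closed form. With $B_V=V^\top\Omega V$ and $\sigma_K(t)=\int_{t_0}^t q_\eta(K(s)V^\top)\,ds$ it writes $K(t)=e^{\sigma_K(t)\Omega}K(t_0)e^{-\sigma_K(t)B_V}$, and similarly $S(t)=e^{-\sigma_S(t)B_U}S(t_0)e^{\sigma_S(t)B_V}$. Because all generators are skew, each factor is multiplied on both sides by orthogonal matrices, so rank is preserved. In fact every singular value is preserved, in any dimension and for any rank.

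You instead track two conserved quantities, $\norm{\boldsymbol k_1\times\boldsymbol k_2}_2$ and $\det S$. You obtain them from the cross-product identity and Jacobi's formula already used in Appendix~A, with the forcing switched off. This reuses existing computations and needs no solution formula. It is, however, specific to $3\times2$ factors and certifies only full column rank: if $\boldsymbol k_1\times\boldsymbol k_2$ vanishes initially, you learn only that $\rank K\le1$ persists. That is exactly what the rank-admissibility argument in Proposition~\ref{prop:full-rank-admissibility} needs, so nothing is lost here.

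One small slip in the L case: $M_\eta(UL^\top)^\top U=q_\eta\bigl(\Omega L-L\,U^\top\Omega U\bigr)$, not its negative. Hence $\boldsymbol\lambda_1\times\boldsymbol\lambda_2$ satisfies $\dot{\boldsymbol c}=+q_\eta\Omega\boldsymbol c$, in agreement with \eqref{eq:appendix-normalized-L}. Since both $\pm\Omega$ are skew, your conclusion is unaffected.
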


\begin{proof}
For a K subproblem, set
\(B_V:=V^\top\Omega V\) and
\(\sigma_K(t):=\int_{t_0}^tq_\eta(K(s)V^\top)\,ds\).  Then
\begin{align}
 \dot K
 &=M_\eta(KV^\top)V
 =q_\eta(KV^\top)(\Omega K-KB_V),
 \label{eq:post-reduction-k}\\
 K(t)&=e^{\sigma_K(t)\Omega}K(t_0)e^{-\sigma_K(t)B_V}.
 \notag
\end{align}
For an S subproblem, set \(B_U:=U^\top\Omega U\) and
\(\sigma_S(t):=\int_{t_0}^tq_\eta(US(s)V^\top)\,ds\).  Then
\begin{equation}\label{eq:post-reduction-s}
 S(t)=e^{-\sigma_S(t)B_U}S(t_0)e^{\sigma_S(t)B_V}.
\end{equation}
The L formula is identical to the K formula with
\((K,V,B_V)\) replaced by \((L,U,B_U)\).  Since
\(\Omega,B_U,B_V\) are skew-symmetric, every exponential above is
orthogonal.  The asserted ranks are therefore preserved.
\end{proof}

\subsection{A uniform rank neighborhood}
\label{app:uniform-neighborhood}

For \(h>0\), define
\begin{equation}\label{eq:appendix-block-start-chart}
 S_h(s_{11},s_{12},s_{21},\Delta_S):=
 \begin{pmatrix}
 s_{11}&s_{12}\\
 s_{21}&(s_{12}s_{21}+h^3\Delta_S)/s_{11}
 \end{pmatrix}.
\end{equation}
For orthonormal \(U,V\in\mathbb R^{3\times2}\), set
\begin{equation}\label{eq:appendix-fixed-neighborhood}
\begin{aligned}
 d_{\mathrm{fac}}:={}&\norm{U-\mathcal B(0)}_{\F}
 +\norm{V-\mathcal B(0)}_{\F}\\
 &+|s_{11}-1|+|s_{12}|+|s_{21}|+|\Delta_S-1|.
\end{aligned}
\end{equation}

\begin{lemma}[Uniform rank neighborhood]
\label{lem:desingularized-rank-neighborhood}
There exist \(\rho\in(0,1/2)\) and \(h_*>0\), independent of
\(\eta\in\{+1,-1\}\), such that, whenever \(d_{\mathrm{fac}}\le\rho\) and
\(0<h\le h_*\), every periodic four-step block starting from the factor
triple
\[
 \bigl(U,S_h(s_{11},s_{12},s_{21},\Delta_S),V\bigr)
\]
satisfies, throughout its corresponding factor substeps,
\begin{equation}\label{eq:appendix-uniform-denominators}
\begin{aligned}
 |s_{11}|&\ge\frac12,
 &|\Delta_S|&\ge\frac{3\cos(\pi/8)-1}{4},\\
 \norm{\boldsymbol k_1}_2,\norm{\boldsymbol\lambda_1}_2
 &\ge\frac12,\\
 \norm{\boldsymbol w_K\times\boldsymbol k_1}_2,
 \norm{\boldsymbol w_L\times\boldsymbol\lambda_1}_2
 &\ge\frac12\cos(\pi/8).
\end{aligned}
\end{equation}
Consequently, the complete block is rank-admissible.  The same
conclusion holds for the four-step switch-off block.
\end{lemma}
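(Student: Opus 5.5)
We prove Lemma~\ref{lem:desingularized-rank-neighborhood} by a continuity and compactness argument in the desingularized coordinates of Subsection~\ref{app:desingularized-factor-coordinates}.

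\textbf{Step 1: desingularized state and vector field.} The state of each substep is the tuple consisting of
\begin{itemize}
\item the $K$ or $L$ data $(\boldsymbol k_1,\boldsymbol w_K,p_K)$ or $(\boldsymbol\lambda_1,\boldsymbol w_L,p_L)$,
\item the core data $(s_{11},s_{12},s_{21},\Delta_S)$,
\item the frozen orthonormal factors.
\end{itemize}
The equations for this tuple are \eqref{eq:appendix-K-column-equation}, \eqref{eq:appendix-normalized-K}, \eqref{eq:appendix-normalized-K-parallel}, \eqref{eq:appendix-S-equation}, \eqref{eq:appendix-normalized-S}, \eqref{eq:appendix-core-coordinate-odes} and their $L$ analogues. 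We use the reconstructions \eqref{eq:appendix-QR-reconstruction}--\eqref{eq:appendix-core-reconstruction} and the signed QR maps \eqref{eq:appendix-signed-QR-basis}--\eqref{eq:appendix-signed-QR-core} at the interfaces.

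On the open set where $|s_{11}|>1/4$, $\norm{\boldsymbol k_1}_2,\norm{\boldsymbol\lambda_1}_2>1/4$ and $\norm{\boldsymbol w\times\boldsymbol k_1}_2>0$, these right-hand sides and interface maps are smooth. They depend smoothly on the parameter $h\in[0,h_*]$, including $h=0$, because no negative power of $h$ survives. The factors $q_\eta$, $\tanh$ and $(1+\norm X_{\F}^2)^{-1/2}$ are smooth and bounded with bounded derivatives, uniformly in $\eta$. Composing the twenty substep flows with the interface maps therefore gives a map $\mathcal G_\eta(h,\cdot)$ from block-start data to all intermediate states. This map is continuous jointly in $h$ and the data, as long as the trajectory remains in the open set.

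\textbf{Step 2: margins at the reference point.} At $h=0$ and the reference tuple $d_{\mathrm{fac}}=0$, Subsections~\ref{app:leading-factor-cycle}--\ref{app:positive-rank-margins} give the explicit trajectory. Along it,
\begin{itemize}
\item $\norm{\boldsymbol k_1}_2=\norm{\boldsymbol\lambda_1}_2=|s_{11}|=1$;
\item $\norm{\boldsymbol w\times\boldsymbol k_1}_2\ge\gamma$, because $\boldsymbol w_K$ is a multiple of $\boldsymbol e_1\times\boldsymbol\nu$, which is orthogonal to $\boldsymbol k_1=\boldsymbol e_1$, with coefficient at least $\gamma$;
\item $\Delta_S\ge(3\gamma-1)/2$.
\end{itemize}
Each of these is twice the corresponding bound in \eqref{eq:appendix-uniform-denominators}, so the reference trajectory lies strictly inside the set defined by those bounds.

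\textbf{Step 3: transfer to a neighborhood.} Continuous dependence of ODE solutions on initial data and parameters, over the compact total $\theta$-interval $[0,4]$, holds uniformly because the vector fields are Lipschitz on a compact neighborhood of the reference trajectory, uniformly in $\eta$. By Gronwall, there are $\rho$ and $h_*$ such that $d_{\mathrm{fac}}\le\rho$ and $h\le h_*$ keep every intermediate quantity within half its reference margin. The half-margins are exactly the constants in \eqref{eq:appendix-uniform-denominators}.

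The main technical point is the interfaces, where a potential loss of margin could escape the Gronwall estimate. We handle it by a first-exit argument: as long as the margins hold, the QR maps are Lipschitz, so perturbations grow by at most a fixed factor per interface. There are only twenty interfaces, so finitely many such factors compound. We also note that the initial chart uses $\Delta_S$ through $s_{22}=(s_{12}s_{21}+h^3\Delta_S)/s_{11}$, which is smooth in the chart variables for $|s_{11}|\ge 1/2$. This holds since $\rho<1/2$.

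\textbf{Step 4: conclusion.} The bounds give $\boldsymbol w_K,\boldsymbol w_L\ne0$ and $\Delta_S\ne0$, hence rank admissibility by \eqref{eq:appendix-rank-equivalences}. The switch-off block is handled identically. Its forcing \eqref{eq:switch-off-scaled} is smooth in $\theta$, and its reference margins in Subsection~\ref{app:switch-off-margins} are at least $\cos(\pi\beta/8)$ and $(3\cos(\pi\beta/8)-1)/2$. If these are smaller than the periodic ones, we shrink $\rho$ and $h_*$ and state the constants as the minimum of the two cases.
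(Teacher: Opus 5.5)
Your continuity--compactness argument is the same as the paper's. You use closed desingularized systems whose only denominators are $|s_{11}|$, $\norm{\boldsymbol k_1}_2$, $\norm{\boldsymbol w_K\times\boldsymbol k_1}_2$ and their $L$ analogues. You take reference margins from the explicit $h=0$ cycle, keep half of each margin by continuous dependence on $h$ and the start data, and compose finitely many times through the twenty substeps and interface maps. That part is fine.

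The one place the proof falls short of the statement is the switch-off block. Since $\beta=27904/16807>1$, the switch-off reference margins $\cos(\pi\beta/8)$ and $\tfrac{3\cos(\pi\beta/8)-1}{2}$ are strictly smaller than the periodic ones. So your fallback is actually triggered. Taking the minimum of the two sets of half-margins would give about $\tfrac12\cos(\pi\beta/8)\approx0.40$ and $\tfrac{3\cos(\pi\beta/8)-1}{4}\approx0.35$. These lie below the constants $\tfrac12\cos(\pi/8)\approx0.46$ and $\tfrac{3\cos(\pi/8)-1}{4}\approx0.44$, which the lemma asserts for the switch-off block as well.

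The continuity argument does not require halving. It only needs each reference margin to be strictly larger than the target constant. The missing step is therefore the comparison the paper makes explicitly:
\[
\cos(\pi\beta/8)>\tfrac12\cos(\pi/8),\qquad \tfrac{3\cos(\pi\beta/8)-1}{2}>\tfrac{3\cos(\pi/8)-1}{4}.
\]
Both hold, since $\pi\beta/8\approx0.652$ gives $\cos(\pi\beta/8)\approx0.795$. With this check, shrinking $\rho$ and $h_*$ yields the stated constants for the switch-off block. Without it, you obtain rank-admissibility, which is all that is used downstream, but not the lemma with its stated bounds.
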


\begin{proof}
If \(d_{\mathrm{fac}}\le\rho\), then
\begin{equation}\label{eq:appendix-s11-lower-bound}
 |s_{11}|\ge1-\rho>\frac12,
 \qquad
 S_h=
 \begin{pmatrix}s_{11}\\s_{21}\end{pmatrix}
 \begin{pmatrix}1&s_{12}/s_{11}\end{pmatrix}
 +h^3\begin{pmatrix}0&0\\0&\Delta_S/s_{11}\end{pmatrix}.
\end{equation}
Equations \eqref{eq:appendix-K-column-equation},
\eqref{eq:appendix-normalized-K}, and
\eqref{eq:appendix-normalized-K-parallel}, together with
\eqref{eq:appendix-QR-reconstruction}, form a closed system for
\((\boldsymbol k_1,p_K,\boldsymbol w_K)\).  Its QR transfer is
\eqref{eq:appendix-signed-QR-basis}--
\eqref{eq:appendix-signed-QR-determinant}; its only denominators are
\begin{equation}\label{eq:appendix-K-denominators}
 \norm{\boldsymbol k_1}_2,
 \qquad
 \norm{\boldsymbol w_K\times\boldsymbol k_1}_2.
\end{equation}
Equations \eqref{eq:appendix-S-equation},
\eqref{eq:appendix-core-coordinate-odes}, and
\eqref{eq:appendix-normalized-S}, together with
\eqref{eq:appendix-core-reconstruction}, are closed in
\((s_{11},s_{12},s_{21},\Delta_S)\); their only denominator is
\(|s_{11}|\).  The L equations are the same closed construction with
\((\boldsymbol k_i,p_K,\boldsymbol w_K)\) replaced by
\((\boldsymbol\lambda_i,p_L,\boldsymbol w_L)\).

The bounds proved in Subsection~\ref{app:positive-rank-margins} give,
along every $h=0$ desingularized path,
\begin{equation}\label{eq:appendix-reference-rank-margin}
\begin{aligned}
 \norm{\boldsymbol k_1}_2
 &=\norm{\boldsymbol\lambda_1}_2=|s_{11}|=1,\\
 \norm{\boldsymbol w_K\times\boldsymbol k_1}_2,
 \ \norm{\boldsymbol w_L\times\boldsymbol\lambda_1}_2
 &\ge\cos(\pi/8),
 &|\Delta_S|&\ge\frac{3\cos(\pi/8)-1}{2}.
\end{aligned}
\end{equation}

The union of the twenty $h=0$ desingularized substep paths is compact.  On the open
set where the quantities in \eqref{eq:appendix-K-denominators} and
\(|s_{11}|\) are nonzero, the desingularized differential equations
and the QR and core transfers are continuous.  Starting with
\eqref{eq:appendix-reference-rank-margin}, continuous dependence
preserves half of every lower bound on the first substep.  Applying the
smooth transfer gives the initial data for the next substep.  Repeating
this argument through the finite list \eqref{eq:factor-transfers}
chooses one \(\rho\) and one \(h_*\) for which
\eqref{eq:appendix-uniform-denominators} holds through all twenty
substeps.

For the switch-off block, Subsection~\ref{app:switch-off-margins}
establishes the reference-path margins
\eqref{eq:reduction-rank-coefficients}--
\eqref{eq:reduction-rank-margin}.  In particular,
\begin{equation}\label{eq:appendix-switch-off-margin-comparison}
 \cos(\pi\beta/8)>\frac12\cos(\pi/8),\qquad
 \frac{3\cos(\pi\beta/8)-1}{2}
 >\frac{3\cos(\pi/8)-1}{4}.
\end{equation}
The same finite-step argument therefore gives
\eqref{eq:appendix-uniform-denominators} for the switch-off block.

Finally,
\begin{equation}\label{eq:appendix-rank-conclusion}
 \boldsymbol k_1\times\boldsymbol k_2=h^3\boldsymbol w_K,
 \qquad \det S=h^3\Delta_S,
 \qquad \boldsymbol\lambda_1\times\boldsymbol\lambda_2=h^3\boldsymbol w_L.
\end{equation}
For \(h>0\), \eqref{eq:appendix-uniform-denominators} therefore implies
\(\operatorname{rank}K=\operatorname{rank}L=2\) and \(\det S\ne0\).
\end{proof}

\subsection{Uniform block and projector expansions}
\label{app:uniform-block-expansions}

For the step indexed by \(\ell\), let \(V_0\) be its input orthonormal
basis for the matrix row space,
\(U_1\) the basis after \(K_1\), and \(V_1\) the basis after \(L\).
Define
\begin{equation}\label{eq:appendix-projector-definitions}
 P^h_{\ell,\eta}:=V_0V_0^\top,
 \qquad
 Q^h_{\ell,\eta}:=U_1U_1^\top,
 \qquad
 P^h_{\ell+1,\eta}:=V_1V_1^\top.
\end{equation}

\begin{lemma}[Uniform block and projector expansions]
\label{lem:desingularized-block-map}
Let \(\rho\) and \(h_*\) be as in
Lemma~\ref{lem:desingularized-rank-neighborhood}.  The following three
conclusions hold for \(d_{\mathrm{fac}}\le\rho\).

First, the complete periodic four-step factor output after the gauge
\eqref{eq:appendix-gauge-map}, its numerical matrix endpoint, and the
corresponding full-flow endpoint extend jointly \(C^3\) to
\(0\le h\le h_*\) as functions of
\((h,U,V,s_{11},s_{12},s_{21},\Delta_S)\).  Using fixed input and output
charts near $\mathcal B(0)$ and a fixed finite family of phase-local
Stiefel charts for the intermediate bases, their derivatives through
total order three are bounded by a constant \(C\), independently of
\(\eta\).

Second, for the initial tuple
\begin{equation}\label{eq:appendix-initial-tuple}
 (U,V,s_{11},s_{12},s_{21},\Delta_S)
 =(\mathcal B(0),\mathcal B(0),1,0,0,1),
\end{equation}
the projectors in \eqref{eq:appendix-projector-definitions} satisfy
\begin{align}
 P^{h}_{\ell,\eta}
 &=E_{11}+W(\ell)+hP^{(1)}_\ell+O(h^2),\notag\\
 Q^{h}_{\ell,\eta}
 &=E_{11}+W(\ell+\tfrac12)+hQ^{(1)}_\ell+O(h^2),\notag\\
 P^{h}_{\ell+1,\eta}
 &=E_{11}+W(\ell+1)+hP^{(1)}_{\ell+1}+O(h^2),
 \label{eq:appendix-projector-jets}
\end{align}
where the three linear coefficients are independent of \(\eta\).
Third, for every such tuple,
\begin{equation}\label{eq:appendix-nearby-projectors}
\begin{aligned}
 \norm{P^h_{\ell,\eta}-E_{11}-W(\ell)}_{\F}
 &\le C(d_{\mathrm{fac}}+h),\\
 \norm{Q^h_{\ell,\eta}-E_{11}-W(\ell+\tfrac12)}_{\F}
 &\le C(d_{\mathrm{fac}}+h),\\
 \norm{P^h_{\ell+1,\eta}-E_{11}-W(\ell+1)}_{\F}
 &\le C(d_{\mathrm{fac}}+h).
\end{aligned}
\end{equation}
\end{lemma}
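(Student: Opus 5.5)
The plan is to treat the whole periodic block as a finite composition of smooth maps in the desingularized coordinates of Subsection~\ref{app:desingularized-factor-coordinates}, and to read off the three conclusions from the implicit-function and smooth-dependence theory for ODEs with parameters.

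\emph{First conclusion.} Work in the variable $\theta=t/h$. For the K substep, the state is $(\boldsymbol k_1,p_K,\boldsymbol w_K)$. The right-hand sides \eqref{eq:appendix-K-column-equation}, \eqref{eq:appendix-normalized-K}, \eqref{eq:appendix-normalized-K-parallel} are polynomial in $h$, after substituting the reconstruction \eqref{eq:appendix-QR-reconstruction}. They are real-analytic in the state wherever $\norm{\boldsymbol k_1}_2\ne0$, and they contain $q_\eta$, which is smooth with bounded derivatives by Proposition~\ref{prop:family-bounds}. The S and L substeps are handled identically via \eqref{eq:appendix-S-equation}, \eqref{eq:appendix-normalized-S}, \eqref{eq:appendix-core-coordinate-odes}, \eqref{eq:appendix-core-reconstruction}, \eqref{eq:appendix-normalized-L}, and \eqref{eq:appendix-normalized-L-parallel}. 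Each vector field is therefore smooth in (state, $h$) on the open set where the denominators are nonzero, and this extends to $h\in[-h_*,h_*]$. The interface maps are smooth on the same set: these are the signed QR maps \eqref{eq:appendix-signed-QR-basis}--\eqref{eq:appendix-signed-QR-determinant}, the core transposes, and the final gauge $G$. Lemma~\ref{lem:desingularized-rank-neighborhood} guarantees that, for $d_{\mathrm{fac}}\le\rho$ and $0<h\le h_*$, every trajectory stays in the compact region where the denominators obey \eqref{eq:appendix-uniform-denominators}. Smooth dependence of ODE solutions on initial data and parameters over the fixed $\theta$-intervals then gives joint $C^3$ regularity of the composed map. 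The $C^3$ bounds are uniform because the vector fields have bounded derivatives there. They are independent of $\eta$ because $\eta$ enters only through $\tanh$, whose derivatives are bounded by absolute constants.

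The numerical matrix endpoint is $U_2S^+V_1^\top$. Its lower-right core entry is reconstructed through \eqref{eq:appendix-core-reconstruction}, so the endpoint is a smooth function of the output coordinates. For the full-flow endpoint, apply the rescaling \eqref{eq:actual-block-rescaling}: the flow $dX/d\theta=hM_\eta(X)+h^3W'(\theta)$ is smooth in $(h,X)$. Its initial matrix $US_hV^\top$ is smooth in the tuple by \eqref{eq:appendix-block-start-chart} once $s_{22}$ is expanded as above. The Stiefel charts are fixed local charts around the reference bases $\mathcal B(\ell)$ and $\mathcal B(\ell+\frac12)$; I would take, for example, the polar-projection chart. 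Continuity gives neighborhoods that are uniform, because the reference path is compact.

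\emph{Third conclusion.} This follows from the first by the mean value theorem. At $h=0$ and the reference tuple, Subsection~\ref{app:leading-factor-cycle} computes the bases exactly: $V_0=\mathcal B(\ell)$, $U_1=\mathcal B(\ell+\frac12)$, $V_1=\mathcal B(\ell+1)$, so the projectors equal $E_{11}+W(\cdot)$. A $C^1$ bound on the map from (tuple, $h$) to the bases, hence to the projectors, then gives \eqref{eq:appendix-nearby-projectors}.

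\emph{Second conclusion.} The expansions \eqref{eq:appendix-projector-jets} follow from Taylor expansion in $h$ to first order, with a remainder bounded by the uniform $C^2$ bound. The only nontrivial claim is that $P^{(1)}_\ell$, $Q^{(1)}_\ell$, $P^{(1)}_{\ell+1}$ do not depend on $\eta$; I expect this to be the main obstacle. My approach is to differentiate the desingularized system once in $h$ at $h=0$. In every equation $q_\eta$ appears only in products $h\,q_\eta(X)$. The $h$-derivative of such a product at $h=0$ is $q_\eta(X_0)$, evaluated along the $h=0$ reference path. On that path the represented matrices equal $E_{11}$ plus $h^3$ terms, so $X_0$ is $E_{11}$ or a matrix whose $(3,1)$ entry vanishes and whose Frobenius norm is $1$. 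In either case $q_\eta(X_0)=1/\sqrt2$ for both signs, since $\tanh(0)=0$. Consequently the first variational equations, which are linear with $\eta$-independent inhomogeneity, coincide for $\eta=\pm1$. So do the first derivatives of the smooth interface maps. The first-order jets of the bases, and hence of the projectors, are therefore equal. Terms involving $Dq_\eta$ would need a second $h$-derivative, which is the $O(h^2)$ part.

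The care needed is to check that the reference $h=0$ states indeed have $x_{31}=0$ throughout all twenty substeps. The K, L, and S matrices there are built from $\boldsymbol e_1$ in the first column, with corrections of order $h^3$ only, so this should hold. It also has to hold at the reconstructed represented matrix $KV^\top$ and so on, not merely at the factors. This is how I would record the equal-first-jet identities used in Lemma~\ref{lem:numerical-difference}.
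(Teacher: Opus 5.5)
Your proposal is correct and follows essentially the same route as the paper: uniform $C^3$ bounds from smooth dependence of the desingularized K, S, L systems and the QR, transpose and gauge transfers on a compact set where all denominators stay bounded below; the nearby-projector estimates from the mean value theorem; and $\eta$-independence of the linear projector coefficients because $q_\eta$ enters only as $h\,q_\eta$ and the $h=0$ represented matrix is exactly $E_{11}$, where $q_+=q_-=1/\sqrt2$. The paper shrinks $\rho$ and $h_*$ so that every trajectory lies in a fixed tube around the reference path, whereas you invoke the rank-neighborhood lemma directly; this difference is only cosmetic.
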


\begin{proof}
Let $\mathcal K$ be a closed tube around the union of the twenty
$h=0$ desingularized substep paths.  By
\eqref{eq:appendix-reference-rank-margin}, the tube can be chosen inside
the open set on which $|s_{11}|$, $\norm{\boldsymbol k_1}_2$,
$\norm{\boldsymbol w_K\times\boldsymbol k_1}_2$,
$\norm{\boldsymbol\lambda_1}_2$, and
$\norm{\boldsymbol w_L\times\boldsymbol\lambda_1}_2$ have one common
positive lower bound.  After decreasing $\rho$ and $h_*$ if necessary,
Lemma~\ref{lem:desingularized-rank-neighborhood} places every substep
trajectory with $d_{\mathrm{fac}}\le\rho$ and $0\le h\le h_*$ in $\mathcal K$.

After the reconstructions \eqref{eq:appendix-QR-reconstruction}--
\eqref{eq:appendix-core-reconstruction}, the K, S, and L coordinate
equations contain only nonnegative powers of $h$.  Their vector fields
are therefore $C^3$ on a neighborhood of
$[0,h_*]\times\mathcal K$, with derivatives through order three bounded
uniformly in $\eta$.  The signed QR maps
\eqref{eq:appendix-signed-QR-basis}--
\eqref{eq:appendix-signed-QR-determinant} have the same property.  The
remaining transfers in \eqref{eq:factor-transfers} are transposition
and matrix multiplication, and the gauge
\eqref{eq:appendix-gauge-map} is linear.  Thus all vector fields and
transfer maps have uniformly bounded derivatives through order three
on the same tube.  Differentiating each fixed-interval integral
equation up to three times in $h$ and the initial coordinates gives
linear variational equations with lower-order inhomogeneous terms.
Gronwall's inequality bounds them recursively, and finite composition
over the twenty substeps and the final gauge preserves one common bound.

The corresponding full solution on one periodic block satisfies
\begin{equation}\label{eq:appendix-full-flow-parameter-equation}
 \frac{dA}{d\theta}=hM_\eta(A)+h^3W'(\theta),
 \qquad 0\le\theta\le4.
\end{equation}
The same integral-equation argument gives the uniform \(C^3\) bound for
this full-flow endpoint.

At \eqref{eq:appendix-initial-tuple}, the constant terms of the
projectors are
\[
 P^0_{\ell,\eta}=E_{11}+W(\ell),
 \qquad
 Q^0_{\ell,\eta}=E_{11}+W(\ell+\tfrac12),
 \qquad
 P^0_{\ell+1,\eta}=E_{11}+W(\ell+1).
\]
Along this $h=0$ desingularized path, the represented matrix is
\(E_{11}\), and
\begin{equation}\label{eq:appendix-equal-first-jets}
 q_+(E_{11})=q_-(E_{11})=\frac1{\sqrt2}.
\end{equation}
Since every occurrence of \(q_\eta\) in the desingularized equations
is multiplied by \(h\),
\[
 \left.\partial_h\bigl(hq_\eta(X(h))\bigr)\right|_{h=0}
 =q_\eta(E_{11});
\]
terms containing $Dq_\eta$ enter only at the next order.  Therefore
\eqref{eq:appendix-equal-first-jets} gives
\begin{equation}\label{eq:appendix-projector-derivative-equality}
\begin{aligned}
 \left.\partial_hP^h_{\ell,+}\right|_{h=0}
 &=\left.\partial_hP^h_{\ell,-}\right|_{h=0}
 =:P^{(1)}_\ell,\\
 \left.\partial_hQ^h_{\ell,+}\right|_{h=0}
 &=\left.\partial_hQ^h_{\ell,-}\right|_{h=0}
 =:Q^{(1)}_\ell,\\
 \left.\partial_hP^h_{\ell+1,+}\right|_{h=0}
 &=\left.\partial_hP^h_{\ell+1,-}\right|_{h=0}
 =:P^{(1)}_{\ell+1}.
\end{aligned}
\end{equation}
Taylor's formula applied to
\eqref{eq:appendix-projector-derivative-equality} proves
\eqref{eq:appendix-projector-jets}.  The mean-value theorem applied to
the jointly \(C^1\) projector maps gives
\eqref{eq:appendix-nearby-projectors}.
\end{proof}

\bibliographystyle{siamplain}
\bibliography{references}

\end{document}